\documentclass[12pt]{amsart}
\usepackage{amsmath}
\usepackage{amssymb}
\usepackage{enumerate}

\makeatletter
\@namedef{subjclassname@2020}{%
  \textup{2020} Mathematics Subject Classification}
\makeatother

\usepackage[T1]{fontenc}


\newtheorem{theorem}{Theorem}[section]
\newtheorem{corollary}[theorem]{Corollary}
\newtheorem{lemma}[theorem]{Lemma}
\newtheorem{proposition}[theorem]{Proposition}
\newtheorem{problem}[theorem]{Problem}



\theoremstyle{definition}

\newtheorem{remark}[theorem]{Remark}

\newtheorem{notation}[theorem]{Notation}
\newtheorem{construction}[theorem]{Construction}


\numberwithin{equation}{section}



\textwidth=36cc
\baselineskip 16pt
\textheight 620pt
\headheight 20pt
\headsep 20pt
\topmargin 0pt
\footskip 40pt
\parskip 0pt
\oddsidemargin 10pt
\evensidemargin 10pt


\newcommand{\btu}{\bigtriangleup}

\newcommand{\fF}{\mathfrak F}

\newcommand{\cA}{\mathcal A}

\newcommand{\cC}{\mathcal C}

\newcommand{\fA}{\mathfrak A}
\newcommand{\fB}{\mathfrak B}

\newcommand{\cG}{\mathcal G}

\newcommand{\ult}{\protect{ult}}
\newcommand{\clop}{\protect{\rm Clop}}
\newcommand{\la}{\langle}
\newcommand{\ra}{\rangle}
\newcommand{\sub}{\subseteq}
\newcommand{\eps}{\varepsilon}
\newcommand{\er}{\mathbb R}
\newcommand{\sm}{\setminus}
\newcommand{\con}{\mathfrak c}

\newcommand{\vf}{\varphi}

\newcommand{\wh}{\widehat}

\newcommand{\oo}{\protect{\omega_1}}
\newcommand{\azm}{\aleph_0{\rm -monolithic}}
\newcommand{\cyl}{\protect{\rm cyl}}
\newcommand{\lo}{{\rm Lim}(\omega_1)}


\begin{document}


\baselineskip=15pt


\date{}

\title{Monolithic spaces of measures}

\author[G.\ Plebanek]{Grzegorz Plebanek}
\address{Instytut Matematyczny\\ Uniwersytet Wroc\l awski\\ Pl.\ Grunwaldzki 2/4\\
50-384 Wroc\-\l aw\\ Poland} \email{grzes@math.uni.wroc.pl}

\subjclass[2020]{Primary 28A33, 46E27, 03E50}
\keywords{$\aleph_0$-monolithic, space of measures, Martin's axiom, Jensen's principle}

\begin{abstract}
For a compact space $K$ we consider the space $P(K)$, of probability regular Borel measures on $K$, equipped with the $weak^\ast$ topology inherited from
$C(K)^\ast$. We discuss possible characterizations of those compact spaces $K$ for which $P(K)$ is $\azm$.
The main result states that under $\diamondsuit$ there exists a nonseparable Corson compact space $K$ such that  $P(K)$ is $\azm$ but
$K$ supports  a measure of uncountable type.
\end{abstract}

\maketitle

\section{Introduction}
A compact space $K$ is $\azm$ if every separable subspace of $K$ is metrizable.
Typical example of such spaces are those  closely related to functional analysis: Eberlein compacta  and, more generally, Corson compacta, i.e.\
spaces that can be embedded into
\[\Sigma(\er^\kappa)=\{x\in\er^\kappa: |\{\alpha: x_\alpha\neq 0\}|\le\omega\},\]
for some $\kappa$.
In fact, there is a monotone version of monolithicity that implies Corson compactness, see Gruenhage \cite{Gr12}.

Given a compact space $K$, we denote by $P(K)$ the space of probability regular Borel measures on $K$ and we always equip  $P(K)$ with the $weak^\ast$ topology
inherited from $C(K)^\ast$, the dual space of the space $C(K)$ of continuous functions. This means that the topology on $P(K)$ is determined by continuity of all the mappings \[ P(K)\ni \mu\to \int_K g\;{\rm d}\mu, \quad g\in C(K).\]

We investigate here  for which compact spaces $K$, the space $P(K)$ is $\azm$.
It is easy to check that $P(K)$ is $\azm$ if and only if $B_{C(K)^\ast}$, the dual unit ball, is $\azm$ in its $weak^\ast$ topology.
Monolithicity of dual unit balls of Banach spaces emerged quite naturally in a number of papers devoted to investigating some isomorphic properties of
Banach spaces related to the space $c_0$,
see Kalenda and Kubi\'s \cite{KK12}, Ferrer, Koszmider and Kubi\'s \cite{FKK13}, Correa and Tausk \cite{CT13}, Ferrer \cite{Fe15}, Correa \cite{Co19}.

Recall that a measure $\mu\in P(K)$ is of type $\kappa$ if $\kappa$ is the density of the space of integrable functions $L_1(\mu)$.
Equivalently, the (Maharam) type of $\mu\in P(K)$ can be defined as the
minimal cardinality of a family $\cC$ of Borel subsets of $K$ having the property that
\[ \inf\{\mu(B\bigtriangleup C): C\in\cC\}=0 \mbox{  for every } B\in Bor(K).\]

In the next section we collect a number of essentially known results;
put together they will  explain  that monolithicity of spaces of measures is easy to handle under Martin's axiom
MA$(\oo)$. Then, at the end of section 2 we formulate a problem, if $\aleph_0$-monolithicity of $P(K)$ can be  characterized by
the property that every $\mu\in P(K)$ is supported by a metrizable subspace of $K$.   Our main objective is to demonstrate  that this is not the case;
assuming the Diamond Principle, we construct  a counterexample in section 4 and analyze the resulting space in section 5.
Section 3 contains some auxiliary results on measures on Boolean algebras.

\section{ Monolithicity under Martin's axiom}

Given a compact space $K$ and $\mu\in P(K)$, by the support of $\mu$ we mean the uniquely determined smallest closed subset of $K$ of measure one. We recall first the following well-known fact.

\begin{lemma}\label{added}
If a compact space $K$ is the support of a measure $\mu\in P(K)$ of type $\omega$ then the space $P(K)$ is separable.
\end{lemma}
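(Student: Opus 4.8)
The plan is to exhibit a weak$^\ast$-dense subset of $P(K)$ that is a continuous image of a separable metric space, namely the set of measures absolutely continuous with respect to $\mu$, parametrised by their densities inside the separable space $L_1(\mu)$. The main conceptual point is that one should \emph{not} try to prove that $K$ itself is separable (that statement is more delicate, and a priori unclear); passing to densities sidesteps it entirely. I use the two hypotheses in the following form: $\mu$ being of type $\omega$ means exactly that $L_1(\mu)$ is separable, and $K=\operatorname{supp}\mu$ means that $\mu(U)>0$ for every nonempty open $U\subseteq K$.

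First I would introduce
\[ \mathcal P_{\mathrm{ac}}=\Bigl\{\, f\mu : f\in L_1(\mu),\ f\ge 0,\ \textstyle\int_K f\dd\mu=1 \,\Bigr\}\subseteq P(K), \]
which is a convex set, and show it is weak$^\ast$-dense in $P(K)$. The heart of this is the approximation of Dirac masses: given $x\in K$ and open neighbourhoods $U\ni x$ shrinking to $x$, the full-support hypothesis guarantees $\mu(U)>0$, so I may form the normalised densities $f_U=\chi_U/\mu(U)$, and for every $g\in C(K)$ one has $\int_K g\,f_U\dd\mu=\mu(U)^{-1}\int_U g\dd\mu\to g(x)$ by continuity of $g$. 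Hence every $\delta_x$ lies in the weak$^\ast$-closure of $\mathcal P_{\mathrm{ac}}$; since that closure is convex, it contains the weak$^\ast$-closed convex hull of the Dirac masses, which is all of $P(K)$.

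Next I would record that the assignment $\Lambda\colon f\mapsto f\mu$ is continuous from the set $\{f\ge 0:\int_K f\dd\mu=1\}$, equipped with the $L_1(\mu)$-norm, to $P(K)$ with the weak$^\ast$ topology, since $\bigl|\int_K g\,(f-f')\dd\mu\bigr|\le\|g\|_\infty\,\|f-f'\|_{1}$ for every $g\in C(K)$. Because $L_1(\mu)$ is separable, its subset of normalised nonnegative densities is separable in the $L_1$-norm, so its image under $\Lambda$ is a separable subspace of $P(K)$; by the previous paragraph this image is weak$^\ast$-dense. As a space having a dense separable subspace is itself separable, it follows that $P(K)$ is separable.

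The only genuinely delicate step is the weak$^\ast$-density of $\mathcal P_{\mathrm{ac}}$, and this is precisely where the assumption $K=\operatorname{supp}\mu$ is indispensable: without full support the densities $f_U$ need not be defined, and the absolutely continuous measures need not be dense. I should also be careful not to misuse heredity of separability (which fails in general): the conclusion is obtained not from $P(K)$ merely \emph{containing} a separable set, but from that separable subspace being \emph{dense}. The remaining verifications are routine estimates.
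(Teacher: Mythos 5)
Your proof is correct, and it takes a noticeably different route from the paper's. You exhibit a weak$^\ast$-dense separable subset from the inside: the absolutely continuous measures $f\mu$ with $f$ ranging over the (separable, since $\mu$ has type $\omega$) set of normalised densities in $L_1(\mu)$, with full support used exactly where you say it is --- to form the normalised indicators $\chi_U/\mu(U)$ and approximate every Dirac mass, whence density follows from convexity of the weak$^\ast$-closure and the fact that $P(K)$ is the closed convex hull of the point masses. The paper instead goes from the outside: it picks countably many normalised restrictions $\mu_n$ so that every nonempty open $U$ satisfies $\mu_n(U)>1/2$ for some $n$, uses these to produce a positive isomorphic embedding $C(K)\to\ell_\infty$, and then observes that the dual operator maps the weak$^\ast$-separable space $P(\beta\omega)$ onto $P(K)$. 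Your argument is more elementary and self-contained (no embedding into $\ell_\infty$, no surjectivity of a dual operator to verify), and it makes the role of each hypothesis transparent; the paper's argument is the one that connects to the M\"agerl--Namioka/Talagrand circle of ideas on weak$^\ast$ separability via intersection numbers, which is why it is phrased that way there. Two small points of care in your write-up: since $K$ need not be first countable, ``neighbourhoods shrinking to $x$'' should be read as the net of all open neighbourhoods of $x$ directed by reverse inclusion (the estimate $\bigl|\mu(U)^{-1}\int_U g\,{\rm d}\mu-g(x)\bigr|\le\sup_{y\in U}|g(y)-g(x)|$ then gives convergence along that net); and the final appeal to ``$P(K)$ is the weak$^\ast$-closed convex hull of the Dirac masses'' deserves a one-line justification (Krein--Milman, or directly the weak$^\ast$-density of finitely supported probability measures). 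Neither is a gap.
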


The question for which compacta $K$ the space $P(K)$ is separable was investigated by Talagrand \cite{Ta80a} and
M\"agerl and Namioka \cite{MN80}.  Lemma \ref{added} follows from the fact that if a measure $\mu\in P(K)$ of countable type is positive on every nonempty opens subset of $K$ then one can easily define a sequence of $\mu_n\in P(K)$ such that for every nonempty open $U\sub K$ there is $n$
with $\mu_n(U)>1/2$.   In turn, this  gives rise to a positive isomorphic embedding $C(K)\to \ell_\infty$
and then the dual operator maps the $weak^\ast$ separable space $P(\beta\omega)$  onto $P(K)$.

It is perhaps worth recalling (though not needed later)
that Talagrand \cite{Ta80a} constructed  under CH two examples showing that the implication of Lemma \ref{added} cannot be reversed and that separability of $P(K)$ does not follow from separability of $C(K)^\ast$. Such examples were later constructed in the usual set theory,
see \cite{DP08} and \cite{APR14}.

The observations given in this section build on the following two results.

\begin{theorem}[Arkhangel'ski\u{\i} and \ Shapirovski\u{\i} \cite{AS87}] \label{thm1}
Under MA$(\oo)$, every compact $\aleph_0$-monolithic $ccc$ space is metrizable.
\end{theorem}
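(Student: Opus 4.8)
The plan is to reduce metrizability to separability and then to force separability using the countable chain condition together with MA$(\oo)$. The first, purely formal, step is this: since $K$ is $\azm$, it suffices to prove that $K$ is separable. Indeed, if $K$ has a countable dense set then $K$ is itself a separable subspace of $K$, hence metrizable by the very definition of $\azm$. Thus the entire content of the theorem is the implication that a compact ccc $\azm$ space is separable under MA$(\oo)$.

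To prove separability I would argue by contradiction. Assuming $K$ is not separable, one builds by transfinite recursion a left-separated sequence $\{x_\alpha:\alpha<\oo\}$, i.e. points with $x_\alpha\notin\overline{\{x_\beta:\beta<\alpha\}}$; by normality of $K$ one may attach open sets $U_\alpha\ni x_\alpha$ whose closures are disjoint from $\overline{\{x_\beta:\beta<\alpha\}}$, so that $x_\beta\notin\overline{U_\alpha}$ whenever $\beta<\alpha$. The role of MA$(\oo)$ enters through the standard fact that under MA$(\oo)$ every ccc space has precaliber $\aleph_1$: the uncountable family $\{U_\alpha:\alpha<\oo\}$ of nonempty open sets then contains an uncountable centered subfamily $\{U_\alpha:\alpha\in S\}$. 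Re-indexing $S$ as $\oo$ and passing to this subfamily preserves the left-separation relation $x_\beta\notin\overline{U_\alpha}$ (for $\beta<\alpha$), and by compactness the finite intersection property yields a point $p\in\bigcap_{\alpha<\oo}\overline{U_\alpha}$.

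The heart of the argument --- and the step I expect to be the main obstacle --- is to convert this configuration into a violation of $\azm$, namely to exhibit a countable set $D\subseteq\{x_\alpha:\alpha<\oo\}$ whose closure $\overline D$ is non-metrizable. The tension is genuine: the centered family records an $\oo$-sized coherence (all the $U_\alpha$ meet, and cluster at $p$), while the left-separation records that each $x_\alpha$ is topologically split off from its predecessors; monolithicity must forbid both phenomena from coexisting inside the separable set $\overline{\{x_\alpha:\alpha<\oo\}}\cup\{p\}$. Concretely, I would try to extract --- using a second appeal to MA$(\oo)$ applied to the ccc poset of finite partial approximations --- a countable subsequence along which the traces of the sets $U_\alpha$ generate an uncountable independent (or uncountable discrete) family of relatively open subsets of $\overline D$, forcing $\overline D$ to have uncountable weight. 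Producing this subsequence, and guaranteeing that the selected open traces remain genuinely independent in the limit, is the delicate combinatorial core of the proof.

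An alternative packaging, which isolates the same difficulty in a different place, is to use the consequence of MA$(\oo)$ that a compact ccc space of weight at most $\aleph_1$ is separable, and then to exploit $\azm$ in a transfinite reflection argument reducing the weight of $K$ to $\aleph_1$; the passage from weight $\aleph_1$ down to $\omega$ is exactly the content of the first reduction, and the reflection step again rests on the interplay between monolithicity and the chain condition. Either way, the routine parts are the two reductions and the precaliber/separability consequences of MA$(\oo)$, while the substantive part is the combinatorial extraction that squeezes a non-metrizable separable subspace out of a hypothetically non-separable ccc $\azm$ compactum.
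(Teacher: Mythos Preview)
The paper does not prove this theorem; it is quoted from Arkhangel'ski\u{\i}--Shapirovski\u{\i} \cite{AS87} as a known result and used as a black box in the proofs of Proposition~\ref{prop} and Corollary~\ref{cor}. So there is no argument in the paper to compare your proposal against.

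As for the proposal on its own merits: the reduction to separability is correct, and invoking precaliber $\aleph_1$ under MA$(\oo)$ to extract an uncountable centered subfamily of the $U_\alpha$ is a sound opening move. The difficulty is exactly where you place it, and the plan you sketch for that step does not work as written. You propose to find a countable $D\subseteq\{x_\alpha\}$ with $\overline{D}$ non-metrizable by arranging that the traces $U_\alpha\cap\overline{D}$ form an uncountable family; but a compact metrizable space has $\con$ many open subsets, so merely having uncountably many distinct traces says nothing about the weight of $\overline{D}$. To contradict metrizability of $\overline{D}$ you would need, for instance, uncountably many pairwise disjoint nonempty open traces, or a point of uncountable character in $\overline{D}$, and the configuration you have assembled (a left-separated sequence, a centered family $\{U_\alpha:\alpha\in S\}$, and a single cluster point $p$) does not produce either of these over any fixed countable $D$. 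Indeed, for $D=\{x_{\alpha_n}:n<\omega\}$ the relevant traces are governed by countably many sets, and the remaining $U_\alpha$ have no controlled relation to $\overline{D}$ at all.

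Your ``alternative packaging'' via reduction to weight $\le\aleph_1$ is closer in spirit to how such results are typically proved (through Shapirovski\u{\i}-type inequalities relating tightness, $\pi$-character and density in compact spaces), but there too you defer the substantive step. In short, the outline is reasonable but the proof is genuinely incomplete at the point you identify, and the specific mechanism you suggest for closing the gap does not succeed.
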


\begin{theorem}[Fremlin \cite{Fr97}] \label{thm2}
Under MA$(\oo)$, if a compact space $K$ carries a measure of uncountable type then $K$ can be continuously mapped onto
$[0,1]^{\oo}$.
\end{theorem}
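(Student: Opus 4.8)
The plan is to build the surjection as a diagonal of continuous functions and to recognise onto-ness through the pushed-forward measure. Concretely, I would look for continuous functions $f_\xi\colon K\to[0,1]$, $\xi<\oo$, and set $f=(f_\xi)_{\xi<\oo}\colon K\to[0,1]^{\oo}$. Since $K$ is compact, $f(K)$ is closed, so $f$ is onto as soon as its image is dense; and the image contains the support of the push-forward measure $f_*\mu$. Thus it suffices to arrange that $f_*\mu$ has full support on $[0,1]^{\oo}$, i.e.\ that $\mu\bigl(\bigcap_{\xi\in F}f_\xi^{-1}(I_\xi)\bigr)>0$ for every finite $F\sub\oo$ and all nonempty open intervals $I_\xi\sub[0,1]$.

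The measure-theoretic skeleton comes for free in ZFC. Because $\mu$ has uncountable type, Maharam's theorem provides, on a set of positive measure and after normalisation, a measure-preserving embedding of the measure algebra of the product Lebesgue measure on $[0,1]^{\oo}$ into the measure algebra of $\mu$; equivalently, a family of $\mu$-measurable functions $h_\xi\colon K\to[0,1]$, $\xi<\oo$, that are stochastically independent and each uniformly distributed. Their joint finite-dimensional laws are products of Lebesgue measure and hence have full support, so the measurable map $(h_\xi)_{\xi<\oo}$ already realises the desired full-support distribution; the only defect is that the $h_\xi$ are merely measurable, not continuous.

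The heart of the argument, and the only place where $\MA(\oo)$ is genuinely needed, is to upgrade the measurable family $(h_\xi)$ to a continuous one without destroying the full-support property. For a single coordinate this is routine (Lusin's theorem together with inner regularity yields continuous approximations), but a transfinite recursion of length $\oo$ cannot be closed off directly, since the requirement on $f_\xi$ couples it to every finite set $F$ containing $\xi$, that is, to conditions imposed both before and after stage $\xi$. I would therefore assemble the whole family at once by a forcing argument. Recording a continuous $f_\xi$ as a nested system of open sets $\{f_\xi<q\}$, $q\in\Q\cap[0,1]$, let the poset $\bP$ consist of finite partial assignments of basic open subsets of $K$ to finitely many pairs $(\xi,q)$, subject to the nesting requirement and to finitely many positivity constraints of the form $\mu\bigl(\bigcap_{\xi\in F}\{a_\xi<f_\xi<b_\xi\}\bigr)>0$. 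The dense sets that introduce each new coordinate, refine the rational levels, and force each rational box to be charged, number $\oo$; a filter meeting them, furnished by $\MA(\oo)$, yields continuous functions $f_\xi$ with $f_*\mu$ of full support, whence $f$ is onto. The auxiliary facts on measures on Boolean algebras collected in Section 3 are exactly what is needed to set up and verify these constraints.

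The main obstacle is to see that $\bP$ is ccc, so that $\MA(\oo)$ applies. I expect this to follow from the countable chain condition of the measure algebra of $\mu$: an uncountable antichain in $\bP$ would, after a $\Delta$-system reduction on the finite supports and a thinning on the finitely many rational levels and open sets used, produce an uncountable family of positive-measure elements of the measure algebra that are pairwise disjoint, contradicting ccc. A secondary delicate point is to ensure the generic object is an honest continuous function with a full-support law rather than merely a consistent system of open sets; this is where the independence and uniform distribution of the underlying $h_\xi$ are used, to guarantee that the positivity constraints remain satisfiable at every stage. It is precisely this bridge from measurable to topological independence that fails without $\MA(\oo)$: there are consistent examples of compacta carrying a measure of type $\oo$ that admit no continuous surjection onto $[0,1]^{\oo}$, so some hypothesis beyond ZFC is unavoidable here.
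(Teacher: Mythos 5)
First, note that the paper does not prove this statement at all: Theorem \ref{thm2} is quoted from Fremlin \cite{Fr97} and used as a black box, so there is no internal proof to compare yours against. Judged on its own terms, your outline follows the right general strategy --- reduce surjectivity onto the cube to a full-support statement for the pushforward, extract an uncountable stochastically independent family via Maharam's theorem, and use $\MA(\oo)$ on a ccc poset to convert measure-theoretic independence into topological independence --- and your closing remark is correct and apt: the space $K$ constructed in this very paper under $\diamondsuit$ is a Corson compactum carrying a measure of type $\omega_1$ and hence admits no continuous surjection onto $[0,1]^{\oo}$, so some hypothesis beyond ZFC is indeed unavoidable.

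As a proof, however, the proposal has genuine gaps concentrated exactly where the theorem's difficulty lies. (i) The poset $\bP$ is underspecified: a nested system of open sets $\{f_\xi<q\}$ only determines a continuous function if one also imposes closure conditions $\overline{U_{q'}}\sub U_{q}$ for $q'<q$; more seriously, you never show that the sets of conditions forcing each box $\bigcap_{\xi\in F}f_\xi^{-1}(I_\xi)$ to be charged are dense. This is the crux: a Lusin-type continuous approximation to $h_\xi$ costs an error $\eps_\xi>0$, and a box over a finite $F$ is then only guaranteed measure $\prod_{\xi\in F}\lv I_\xi\rv-\sum_{\xi\in F}\eps_\xi$, which is vacuous once $F$ is large; explaining how $\MA(\oo)$ defeats this accumulation of errors \emph{is} the theorem, and the proposal only gestures at it. (ii) The ccc argument is wrong as stated: incompatibility in $\bP$ is not disjointness in the measure algebra, so a $\Delta$-system reduction does not hand you an uncountable pairwise disjoint family of positive-measure elements; after matching roots, the obstruction to amalgamating two conditions is the joint satisfiability of their positivity constraints, which again requires the quantitative independence argument you have deferred. (iii) The claim that the auxiliary facts of Section 3 are ``exactly what is needed'' is false: that section concerns subalgebras of $Ba(\omega_1)$ tailored to the $\diamondsuit$ construction and has no bearing on Fremlin's theorem. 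A cleaner route, and closer to \cite{Fr97}, is to aim for a surjection onto $2^{\oo}$ (which maps onto $[0,1]^{\oo}$) by producing an uncountable dyadic system of pairs of disjoint compact sets refining the Maharam-independent family; the role of $\MA(\oo)$ is then isolated in a single combinatorial lemma about extracting an uncountable genuinely independent subfamily from almost-independent compact approximations.
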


We start by noting  basic facts.

\begin{lemma}\label{lem}
Let $K$ be a compact space.

\begin{enumerate}[(a)]
\item If $P(K)$ is $\aleph_0$-monolithic  then so is $K$.
\item If the support of every measure $\mu\in P(K)$ is metrizable then the space $P(K)$ is $\azm$.
\end{enumerate}
\end{lemma}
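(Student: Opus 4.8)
For part (a), the plan is to use the canonical embedding of $K$ into $P(K)$ by Dirac measures. The map $x\mapsto\delta_x$ is continuous, since $\int_K g\dd\delta_x=g(x)$ for every $g\in C(K)$ and the weak$^\ast$ topology is generated by the functionals $\mu\mapsto\int_K g\dd\mu$; it is injective because $C(K)$ separates the points of $K$, and it is a homeomorphism onto its image because $K$ is compact and $P(K)$ is Hausdorff. Now $\aleph_0$-monolithicity is inherited by subspaces: a separable subspace of a subspace of $P(K)$ is a separable subspace of $P(K)$, hence metrizable. Applying this to the copy $\{\delta_x:x\in K\}\cong K$ transfers $\aleph_0$-monolithicity of $P(K)$ to $K$. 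This part is routine.

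For part (b), it suffices to prove that the closure $\overline{D}$ of an arbitrary countable set $D=\{\mu_n:n\in\omega\}\sub P(K)$ is metrizable, because every separable subspace of $P(K)$ is contained in such a closure and metrizability is hereditary. The key device is to collapse the whole sequence into a single measure $\mu=\sum_{n}2^{-n-1}\mu_n\in P(K)$. Since $\mu\ge 2^{-n-1}\mu_n$, we have $\mu_n\le 2^{n+1}\mu$, so each $\mu_n$ is absolutely continuous with respect to $\mu$ and therefore concentrated on $L:=\operatorname{supp}(\mu)$. By hypothesis $L$ is metrizable.

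It remains to confine $\overline{D}$ inside $P(L)$ and to note that $P(L)$ is metrizable. The functional $\nu\mapsto\nu(L)$ is upper semicontinuous on $P(K)$ because $L$ is closed: one writes $\nu(L)=\inf\{\int_K g\dd\nu: g\in C(K),\ g\ge\chi_L\}$, an infimum of weak$^\ast$-continuous functions. Consequently $\{\nu\in P(K):\nu(L)=1\}$ is a closed subset of $P(K)$, and since it contains $D$ it contains $\overline{D}$. This set is canonically homeomorphic to $P(L)$ by restriction of measures. As $L$ is compact metrizable, $C(L)$ is separable, so $P(L)$ is a weak$^\ast$-compact subset of the dual of a separable Banach space and hence metrizable. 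Therefore $\overline{D}$, as a subspace of the metrizable space $P(L)$, is metrizable, and $P(K)$ is $\azm$.

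The main obstacle is precisely the step showing that every weak$^\ast$-limit of the $\mu_n$ remains concentrated on $L$; this relies on the upper semicontinuity of $\nu\mapsto\nu(L)$ for closed $L$, which is what allows the entire closure $\overline{D}$ to be trapped inside the metrizable space $P(L)$ rather than merely the individual measures $\mu_n$.
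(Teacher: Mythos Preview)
Your proof is correct and follows essentially the same route as the paper: part (a) via the Dirac embedding, and part (b) by passing to the support $L$ of $\sum_n 2^{-n}\mu_n$ and observing that $\overline{\{\mu_n\}}\subseteq P(L)$ with $P(L)$ metrizable. You have simply made explicit two steps the paper leaves tacit, namely that $\{\nu\in P(K):\nu(L)=1\}$ is weak$^\ast$-closed and that $P(L)$ is metrizable when $L$ is compact metrizable.
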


\begin{proof}
Clause $(a)$ follows from the fact that $K$ embeds into $P(K)$ via the mapping
\[  K\ni x \to \delta_x\in P(K),\]
where $\delta_x$ is the Dirac measure at $x$.

To check (b) take any sequence of $\mu_n\in P(K)$ and consider the measure
\[ \nu=\sum_{n=1}^\infty 2^{-n}\mu_n\in P(K).\]
Then the support $S$ of $\nu$ is metrizable
and hence $P(S)$ is metrizable too; moreover,  $\overline{\{\mu_n: n=1,2,\ldots\}}\sub P(S)$, and we are done.
\end{proof}

\begin{proposition} \label{prop}
Suppose that $K$ is a compact space such that  $P(K)$ is $\aleph_0$-monolithic.

\begin{enumerate}[(a)]
\item Then every $\mu\in P(K)$ is of type $\le \omega_1$.
\item If $\mu\in P(K)$ is of type $\omega$ then the support of $\mu$ is metrizable.
\item Under MA$(\omega_1)$, the support of every $\mu\in P(K)$ is metrizable.
\end{enumerate}
\end{proposition}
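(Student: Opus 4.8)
The plan is to dispatch (b) and (c) quickly and to concentrate on (a). For (b), let $S=\mathrm{supp}\,\mu$ with $\mu$ of type $\omega$. Identifying measures on the closed set $S$ with measures on $K$ concentrated on $S$ realises $P(S)$ as a subspace of $P(K)$; by Lemma \ref{added} $P(S)$ is separable, so as a separable subspace of the \azm\ space $P(K)$ it is metrizable, and then $S$ is metrizable because $x\mapsto\delta_x$ embeds $S$ into $P(S)$. For (c), fix $\mu\in P(K)$ with support $S$. Being the support of (the strictly positive restriction of) $\mu$, the space $S$ is ccc; it is compact and, by Lemma \ref{lem}(a), it is a subspace of the \azm\ space $K$ and hence itself \azm. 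Theorem \ref{thm1} then gives that under MA$(\oo)$ the space $S$ is metrizable. (Alternatively one can combine (a) and (b) with Theorem \ref{thm2}: clause (a) leaves only type $\omega$ or $\oo$, and Fremlin's theorem rules out the latter, since a continuous surjection of $S$ onto $[0,1]^{\oo}$ would, via the easy image direction used below, force a separable subspace of $S$ to be non-metrizable.)

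The substance is clause (a), which I would prove by contraposition: assuming some $\mu\in P(K)$ has type $\ge\omega_2$, I would exhibit a countable family $C\subseteq P(K)$ whose weak$^\ast$ closure is non-metrizable, contradicting \azm. First, by the equivalent description of type recalled above, the metric measure algebra of $\mu$ has density $\ge\omega_2$; a standard maximal-separated-family argument then produces $\eta>0$ and Borel sets $\{A_\alpha:\alpha<\omega_2\}$ with $\mu(A_\alpha\bigtriangleup A_\beta)\ge\eta$. By Radon regularity and the density of $C(K)$ in $L_1(\mu)$ I would replace the $A_\alpha$ by continuous functions $g_\alpha:K\to[0,1]$ that remain $\eta/2$-separated in $L_1(\mu)$, and assemble them into a single continuous map $G=(g_\alpha)_{\alpha<\omega_2}:K\to[0,1]^{\omega_2}$, pushing $\mu$ forward to $\nu=G_\ast\mu$, for which the coordinate projections stay $\eta/2$-separated in $L_1(\nu)$. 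The map $G$ induces a continuous $G_\ast:P(K)\to P([0,1]^{\omega_2})$, and here I would use the one easy half of the interaction between monolithicity and continuous maps: a continuous image of a compact metrizable space is metrizable. Hence it suffices to find a countable $D\subseteq P([0,1]^{\omega_2})$ lying in the range of $G_\ast$ whose closure is non-metrizable; lifting $D$ to a countable $C\subseteq P(K)$ with $G_\ast[C]=D$ and noting $G_\ast[\overline C]=\overline D$, the non-metrizability of $\overline D$ forces that of $\overline C$. This reduces the problem to the cube, where the only surviving datum is the measure $\nu$ of type $\ge\omega_2$ together with its $\eta/2$-separated coordinates.

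The crux is this last step: manufacturing, inside $P([0,1]^{\omega_2})$, a countable set of measures with non-metrizable closure. I would do this by a recursion of length $\oo$ that builds an $\oo$-free sequence of limit measures sitting inside the closure of a fixed countable set: at stage $\xi$ one selects a coordinate $\alpha_\xi$ not yet used---possible precisely because only countably many coordinates are consumed before stage $\xi$ while the reservoir has size $\omega_2>\oo$---and one exploits the $\eta/2$-separation along $\alpha_\xi$ to split the measures being approximated, forcing an extra, topologically detectable degree of freedom. Keeping the generating set countable is arranged by running the recursion inside an increasing chain of countable elementary submodels containing $\nu$ and the $g_\alpha$. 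Since a separable compact space containing an uncountable free sequence has uncountable tightness and is therefore not metrizable, the resulting $\overline D$ (equivalently $\overline C$) witnesses the failure of \azm. I expect this recursion to be the main obstacle---turning the purely metric $\omega_2$-separation of the measure algebra into a genuinely topological, separably presented non-metrizable closure, and in particular the bookkeeping that makes the $\omega_2\to\oo$ reflection work; everything preceding it is soft.
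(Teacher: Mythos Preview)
Your treatments of (b) and (c) are correct. Part (b) is identical to the paper's argument. For (c), the paper's primary route invokes Fremlin's Theorem~\ref{thm2}: a measure of type $\omega_1$ on $K$ would yield a continuous surjection $K\to[0,1]^{\omega_1}$, contradicting the monolithicity of $K$ obtained from Lemma~\ref{lem}(a). Your route through Theorem~\ref{thm1}, applied directly to the ccc, monolithic support $S$, is perfectly valid and arguably more direct; the paper itself records this alternative in the remark immediately following the proposition.

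Part (a), however, is not proved. The paper disposes of it in one line by citing Talagrand \cite{Ta81}: if $K$ carries a measure of type $\ge\omega_2$ then $P(K)$ admits a continuous surjection onto $[0,1]^{\omega_2}$, and $\aleph_0$-monolithicity passes to continuous images of compacta. You instead attempt to reconstruct such a result from scratch. Your reduction via $G_*$ is sound as far as it goes, but note that it only replaces $K$ by $L=G[K]\subseteq[0,1]^{\omega_2}$, again a compact space supporting a measure of type $\ge\omega_2$; the problem has not changed in kind, only in presentation. The substantive step is the final recursion, which you describe as a program (``I expect this recursion to be the main obstacle'') rather than carry out. In particular, you do not explain how an $\omega_1$-long recursion through an increasing chain of countable elementary submodels keeps the generating set $D$ \emph{countable} while manufacturing an $\omega_1$-free sequence inside $\overline{D}$, nor how the $L_1(\nu)$-separation of a single coordinate $\pi_{\alpha_\xi}$ is converted into a topological splitting of measures in $P(L)$ at stage $\xi$. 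These are exactly the difficulties Talagrand's argument handles; absent them, (a) remains a sketch. The appropriate move here is simply to cite \cite{Ta81}.
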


\begin{proof}
Talagrand \cite{Ta81} showed in ZFC (see also \cite{Pl02}) that if $K$ admits a measure of type $\ge \omega_2$
then $P(K)$ can be continuously mapped onto $[0,1]^{\omega_2}$. But then $P(K)$ cannot be $\azm$, as the property
is preserved by taking continuous images of compacta. Hence,  $(a)$ follows from Talagrand's result.

To check $(b)$,
let $S\sub K$ be the support of a measure $\mu$ of countable type. Then $P(S)$ can be seen as a subspace of $P(K)$; $P(S)$ is separable
by Lemma \ref{added}. Consequently, $P(S)$  is metrizable and  $S$ is also metrizable since $S$  embeds into $P(S)$.

Now to check $(c)$, it is enough to prove that under Martin's axiom $K$ cannot carry a measure of type $\omega_1$.
This follows from Theorem \ref{thm2}: otherwise, there is a continuous surjection $K\to [0,1]^\oo$; since $[0,1]^{\omega_1}$ is not $\azm$,
$K$ cannot be $\azm$ so neither can $P(K)$.
\end{proof}

\begin{corollary}\label{cor} Under Martin's axiom MA$(\omega_1)$,  the following are equivalent for a compact space $K$

\begin{enumerate}[(i)]
\item  $P(K)$ is $\aleph_0$-monolithic;
\item $K$ is $\aleph_0$-monolithic;
\item the support of every $\mu\in P(K)$ is metrizable.
\end{enumerate}
\end{corollary}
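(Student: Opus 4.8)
The plan is to close the cycle $(i)\Rightarrow(ii)\Rightarrow(iii)\Rightarrow(i)$. Two of these implications are already in hand and require no set-theoretic assumption: $(i)\Rightarrow(ii)$ is precisely Lemma \ref{lem}(a), and $(iii)\Rightarrow(i)$ is precisely Lemma \ref{lem}(b). Thus the only genuinely new work, and the only place where Martin's axiom enters, is the implication $(ii)\Rightarrow(iii)$.

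For $(ii)\Rightarrow(iii)$ I would fix an arbitrary $\mu\in P(K)$, let $S\sub K$ be its support, and verify that $S$ satisfies the three hypotheses of the Arkhangel'ski\u{\i}--Shapirovski\u{\i} theorem (Theorem \ref{thm1}). First, $S$ is closed in $K$ by definition of support, hence compact. Second, $\aleph_0$-monolithicity passes to subspaces: any separable subspace of $S$ is a separable subspace of $K$, so by $(ii)$ it is metrizable, and therefore $S$ is itself $\aleph_0$-monolithic. Third, the restriction $\mu|_S$ is a strictly positive probability measure on $S$, meaning it is positive on every nonempty relatively open subset of $S$; indeed, for open $V\sub K$ meeting $S$ we have $\mu(V\cap S)=\mu(V)>0$ precisely because $S$ is the smallest closed set of full measure. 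A compact space carrying a strictly positive measure is $ccc$, since in any disjoint family of nonempty open sets only finitely many can have measure exceeding $1/n$ for each $n$, forcing the family to be countable. Hence $S$ is a compact $\aleph_0$-monolithic $ccc$ space.

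With these three facts assembled, Theorem \ref{thm1} applies under MA$(\oo)$ and yields that $S$ is metrizable. Since $\mu$ was arbitrary, the support of every measure in $P(K)$ is metrizable, which is exactly $(iii)$. Chaining $(i)\Rightarrow(ii)\Rightarrow(iii)\Rightarrow(i)$ then establishes the equivalence, and as a byproduct the combined implication $(i)\Rightarrow(ii)\Rightarrow(iii)$ reproves Proposition \ref{prop}(c).

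I do not expect a serious obstacle here, as the corollary essentially repackages the machinery already developed; the one step that carries the real content is the reduction of the metrizability of $S$ to the $ccc$ property supplied by the strictly positive measure $\mu|_S$, after which the Arkhangel'ski\u{\i}--Shapirovski\u{\i} theorem does the work. Everything else is either a direct citation of Lemma \ref{lem} or the elementary heredity of $\aleph_0$-monolithicity to subspaces.
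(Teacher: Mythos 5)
Your proposal is correct and follows essentially the same route as the paper: the paper also cites Lemma \ref{lem} for $(i)\Rightarrow(ii)$ and $(iii)\Rightarrow(i)$, and proves $(ii)\Rightarrow(iii)$ by observing that the support of $\mu$ is $ccc$ (via the strictly positive restriction of $\mu$) and invoking Theorem \ref{thm1}. You merely spell out the details the paper leaves implicit (heredity of $\aleph_0$-monolithicity and why a strictly positive measure forces $ccc$), all of which are correct.
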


\begin{proof}
 $(i)\to (ii)$ and $(iii)\to (i)$ hold by Lemma \ref{lem}.

To verify $(ii)\to (iii)$ recall that the support of $\mu\in P(K)$ is $ccc$ and apply Theorem \ref{thm1}. Alternatively,
we can use Theorem \ref{thm2} again.
\end{proof}

\begin{remark}
It seemed natural to recall Fremlin's result in our context.
Let us remark, however,   that Proposition \ref{prop}$(c)$ could be  also derived from Theorem \ref{thm1} alone: if $S\sub K$ is the support of $\mu\in P(K)$ then
one can check that $P(S)$ is also $ccc$ subspace of $P(K)$ and it follows that  $P(S)$ is $\azm$.
%
\end{remark}

The implication $(ii)\to (i)$ of  Corollary \ref{cor} is not provable in the usual set theory. Kunen \cite{Ku81} constructed under CH a nonseparable compact space $K$ which is Corson compact (hence $\azm$) and such that $K$ supports a measure $\mu\in P(K)$ of countable type (see
\cite[ the remark on page 287]{Ku81}). Then $P(K)$ is separable but nonmetrizable so $P(K)$ is not $\azm$.
In fact, it can be derived from a result due to Talagrand \cite{Ta80} that under CH there is a Corson compact space $K$ such that
$P(K)$ contains a copy of $\beta\omega$, so the monolithicity of $P(K)$ is dramatically violated.
The status of $(i)\to (iii)$ of  Corollary \ref{cor} seemed to be unclear; to state this explicitly we arrive at the following question.

\begin{problem}\label{problem1}
Can one prove in ZFC that whenever $P(K)$ is $\aleph_0$-monolithic then the support of every $\mu\in P(K)$ is metrizable?
\end{problem}

Problem \ref{problem1} was communicated to us a couple of years ago by Wies{\l}aw Kubi\'s in connection with \cite{KK12} and \cite{FKK13}.
More recently, the same question was asked by Claudia Correa who noted that a  positive answer to \ref{problem1} would  provide a handy
characterization of those compact $K$ for which $P(K)$ is $\azm$ (see \cite{Co19}). We shall show, however,  that this is not the case.

\begin{theorem}\label{main}
Under $\diamondsuit$,  there is a nonmetrizable  Corson compact space $K$ such that $P(K)$ is
$\aleph_0$-monolithic but $K$ supports a measure of type $\omega_1$.
\end{theorem}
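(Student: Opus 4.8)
The plan is to construct $K$ as the Stone space of a Boolean algebra $\fA=\bigcup_{\alpha<\oo}\fA_\alpha$, built by a transfinite recursion of length $\oo$ driven by $\diamondsuit$, where each $\fA_\alpha$ is a countable subalgebra and $\fA$ is generated by a point-countable, separating family of clopen sets $\{C_\alpha:\alpha<\oo\}$. Recall that a zero-dimensional compactum is Corson precisely when its clopen algebra admits such a point-countable separating family; this gives Corson compactness of $K=\St(\fA)$ directly. Alongside $\fA$ I would construct a strictly positive finitely additive probability measure $\lambda$ on $\fA$, whose unique Radon extension $\mu$ to $K$ will be the measure we want of type $\oo$. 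Since a measure of uncountable type cannot live on a metrizable space, nonmetrizability of $K$ is automatic once $\mu$ has uncountable type.

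To force $\mu$ to be of type $\oo$ I would arrange, using the auxiliary results of Section 3, that each new generator stays a fixed distance from the algebra of its predecessors: $\inf\{\lambda(C_\alpha\btu b):b\in\fA_\alpha\}\ge\delta$ for some fixed $\delta>0$. An uncountable family of such $\lambda$-non-approximable generators witnesses that no countable subalgebra is $\lambda$-dense, which is exactly the assertion that the Maharam type of $\lambda$ (hence of $\mu$) is uncountable; as $\fA$ has only $\oo$ generators, the type is at most $\oo$, hence equal to $\oo$. The delicate point already here is that, as a short Fubini argument shows, genuinely $\lambda$-independent generators are incompatible with point-countability; so the $C_\alpha$ must be strongly correlated — each point meeting only countably many of them — while remaining $\lambda$-rich. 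Producing, at each stage, a clopen set of prescribed measure that is $\lambda$-far from the current algebra yet preserves point-countability is one of the two technical cores, and this is what the Boolean-algebra lemmas of Section 3 are designed to deliver.

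The heart of the proof is securing that $P(K)$ is $\azm$, and this is where $\diamondsuit$ enters. A separable subspace of $P(K)$ is the weak$^\ast$ closure $\overline D$ of a countable set $D=\{\nu_n:n\in\omega\}$ of measures, and $\overline D$ is metrizable iff countably many functions from $C(K)$ separate its points; it suffices that all measures in $\overline D$ be determined by a fixed countable set of the coordinates $C_\alpha$. I would use $\diamondsuit$ to guess, at a club of stages $\alpha\in\lo$, the trace on $\fA_\alpha$ of a potentially threatening family $(\nu_n)$, and then continue the recursion so as to seal it: every generator $C_\beta$ with $\beta\ge\alpha$ is chosen to be $\nu_n$-negligible (equivalently, $\nu_n$-measurable relative to $\fA_\alpha$) for all $n$, so that the guessed measures, together with all their weak$^\ast$ limits, factor through the projection onto the countably many coordinates below $\alpha$. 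Since $\fA_\alpha$ is countable, $\St(\fA_\alpha)$ is metrizable and $P(\St(\fA_\alpha))$ is a metrizable compactum containing $\overline D$; a standard reflection argument then shows that every countable $D\sub P(K)$ is caught at some such stage, yielding $\azm$-ity.

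The main obstacle is the tension, at each step, between the two demands on the freshly added generator $C_\alpha$: it must be $\lambda$-far from everything built so far (to keep the type of $\mu$ growing) yet simultaneously invisible to the currently guessed countable family of measures (to keep their closure metrizable), all while remaining part of a point-countable separating family so that $K$ stays Corson. What makes this feasible is that $\diamondsuit$ confronts us with only one countable family of measures at a time against which $C_\alpha$ must be hidden, whereas we retain enough freedom in prescribing the conditional $\lambda$-measures of $C_\alpha$ over $\fA_\alpha$ to keep it $\lambda$-non-approximable. Turning this heuristic into an exact amalgamation statement for finitely additive measures on a growing Boolean algebra — choosing $C_\alpha$ with the correct values against $\lambda$ and against the guessed $\nu_n$ at once — is precisely the lemma I would isolate in Section 3 and treat as the crux of the argument.
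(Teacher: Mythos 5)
Your overall architecture (a Boolean algebra of length $\oo$ with a point-countable generating family for Corson compactness, generators kept at fixed $\lambda$-distance $\delta$ from their predecessors to force uncountable Maharam type, and $\diamondsuit$ guessing countable data at limit stages) matches the paper, but the mechanism you propose for making $P(K)$ $\azm$ has a genuine gap, and in fact cannot work as stated. Your ``sealing'' recipe --- at a guessing stage $\alpha$ make every later generator $\nu_n$-negligible for all measures $\nu_n$ in the guessed family --- runs into two fatal problems. First, $\diamondsuit$ only hands you the restrictions $\nu_n|\fA_\alpha$; the values of $\nu_n$ on generators added after stage $\alpha$ are not determined by these traces, so ``choose $C_\beta$ to be $\nu_n$-negligible'' is not an instruction the construction can carry out: for any new set you add, there is an extension of the guessed trace to the final algebra that charges it. Second, the recipe is inconsistent with the type requirement: the singleton family $\{\lambda\}$ (or its trace) is itself guessed at some stage $\alpha$, and sealing it would force all subsequent generators to be $\lambda$-null, destroying both strict positivity and the condition $\inf\{\lambda(C_\beta\btu b):b\in\fA_\alpha\}\ge\delta$ for $\beta>\alpha$. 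Your closing paragraph acknowledges this tension but the proposed resolution (``only one countable family at a time'') does not address it, since a sealed family must stay sealed for the entire remainder of the construction.

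The idea you are missing is that the problematic measures are not to be tamed individually but classified structurally. The paper works inside $2^{\oo}$ with the genuine product measure $\lambda$ and splits every $\mu\in P(K)$ into a part concentrated on a metrizable subspace (harmless by monolithicity of the Corson compactum $K$) and a part vanishing on all closed metrizable subsets. The $\diamondsuit$-guessing is used only to prove (Lemma \ref{l2}) that any measure of the second kind must be absolutely continuous with respect to $\lambda$ on a tail algebra $\fF(\oo\sm\alpha_0)$: if it were not, the Limit Case (1) branch of the construction would have produced generators whose complements form a metrizable set $M_\alpha$ of positive $\mu$-measure. Metrizability of closures of countable sets of such measures is then a ZFC fact (Lemma \ref{p:3}) exploiting stochastic independence: a Radon--Nikodym derivative on the tail depends on countably many coordinates, so beyond some $\eta$ one has $\mu(G\cap C)=\mu(G)\cdot\lambda(C)$, and convergence on the countable algebra $\fB(\eta)$ propagates to the whole algebra. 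No ``hiding'' of future generators from guessed measures is ever needed, which is precisely how the conflict you identified is avoided. (A minor further point: your parenthetical ``$\nu_n$-negligible (equivalently, $\nu_n$-measurable relative to $\fA_\alpha$)'' conflates two inequivalent conditions, but this is secondary to the structural issue above.)
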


 The construction that is behind our main result  is a variant of Kunen's contruction from \cite{Ku81}
done in the spirit of \cite{Pl97}.
We should recall that Kunen's primary construction from \cite{Ku81} gave $K$ supporting a measure of uncountable type.
However, it seems that one needs to add a number of new ingredients to the inductive process to guarantee that $P(K)$ is indeed $\azm$.
Moreover, our construction requires $\diamondsuit$ and we do not know if \ref{main} follows from CH.
It is worth recalling that Kunen's construction was also used by Brandsma and van Mill \cite{BvM98}, to give
an example of a compact HL space with a non-monolithic hyperspace.

Recall finally that for a Corson compact space $K$, the space $P(K)$ is Corson compact if and only if the support of every $\mu\in P(K)$ is
metrizable, see \cite{AMN}. Hence, the space $P(K)$ announced in Theorem \ref{main} is $\azm$ but not Corson compact.

\section{Measures on some Boolean algebras}

In this section we discuss properties of finitely additive measures on Boolean algebras. If $\cG$ is a subset of a Boolean algebra $\fA$ then $[\cG]$ denotes
the smallest subalgebra of $\fA$ containing $\cG$.

Let us fix   a Boolean algebra $\fA$ for a while; we denote by $P(\fA)$ the space of all finitely additive probability measures on $\fA$.
If we consider $K=\ult(\fA)$,  the Stone space of $\fA$, then   we can speak of
$P(\fA)$ rather than of $P(K)$. Indeed,  every measure on $K$ is uniquely determined by its restriction to the algebra $\clop(K)$ of clopen subsets of $K$, which is
isomorphic to $\fA$. In the other direction, every $\mu\in P(\fA)$ uniquely defines the measure $\wh{\mu}\in P(K)$, where
$\wh{\mu}(\wh{a})=\mu(a)$ for $a\in \fA$. Here $a\to\wh{a}$ denotes the Stone isomorphism between $\fA$ and $\clop(K)$.
Then the $weak^\ast$ topology on $P(K)$ becomes the topology on $P(\fA)$ of convergence on elements of $\fA$.

\begin{lemma}\label{p:1}
Given an algebra $\fA$, the space $P(\fA)$ is monolithic if and only if for every countable set $E\sub P(\fA)$ there is a countable
subalgebra $\fA_0$ of $\fA$ such that every  sequence of $\mu_n\in E$ converging  on $\fA_0$  converges also on $\fA$.
\end{lemma}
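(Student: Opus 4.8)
The plan is to prove both implications by reducing $\azm$ of $P(\fA)$ to the metrizability of one separable piece at a time. Recall that $P(\fA)=P(\ult(\fA))$ is itself compact, that its topology is that of convergence on elements of $\fA$, and that a separable subspace has a countable dense set $E$ and is therefore contained in the compact set $\overline{E}$. Since metrizability is hereditary, the whole statement reduces to the following: $\overline{E}$ is metrizable if and only if there is a countable subalgebra $\fA_0$ detecting convergence of sequences from $E$. The bridge in both directions is the evaluation embedding $\mu\mapsto(\mu(a))_{a\in\fA}$ of $\overline{E}$ into $[0,1]^{\fA}$, under which a compact subset of the cube is metrizable exactly when countably many coordinates separate its points.

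For the forward implication I would assume $P(\fA)$ is $\azm$, fix a countable $E$, and use that $\overline{E}$ is then compact metrizable. The coordinate functions $\mu\mapsto\mu(a)$, $a\in\fA$, separate the points of $\overline{E}$, so applying the standard Lindel\"of ($\sigma$-compactness) argument to the open cover of $\overline{E}\times\overline{E}\setminus\Delta$ by the sets $\{(\mu,\nu):\mu(a)\ne\nu(a)\}$ ($a\in\fA$), I extract countably many $a_n\in\fA$ that already separate points, and put $\fA_0=[\{a_n\}]$, a countable subalgebra. If $\mu_m\in E$ converges on $\fA_0$, then any two $weak^\ast$ cluster points of $(\mu_m)$ lie in $\overline{E}$ and agree on $\fA_0\supseteq\{a_n\}$, hence coincide; as $\overline{E}$ is compact metrizable, this forces $(\mu_m)$ to converge on all of $\fA$.

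The converse is where the real work lies, and it is the step I expect to be the main obstacle. Granting the hypothesis, fix a countable dense $E$ of a given separable subspace and take the associated $\fA_0$. It suffices to show that $\pi_{\fA_0}\colon\overline{E}\to[0,1]^{\fA_0}$ is injective, for then it is a continuous injection of a compact space into the metrizable cube $[0,1]^{\fA_0}$, hence a homeomorphism onto its image, and $\overline{E}$ is metrizable. The difficulty is that the hypothesis concerns only sequences drawn from $E$, while a priori the points of the possibly non-metrizable set $\overline{E}$ need not be sequential limits of $E$. I would circumvent this by an interlacing construction: suppose $\mu\ne\nu$ in $\overline{E}$ agree on $\fA_0$ but differ on some $a^\ast\in\fA$, say $\mu(a^\ast)=s\ne t=\nu(a^\ast)$. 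Enumerating $\fA_0=\{b_1,b_2,\ldots\}$ and using $\mu,\nu\in\overline{E}$, I pick $\mu_{2k}\in E$ in the $weak^\ast$ neighbourhood of $\mu$ prescribing $|\lambda(b_i)-\mu(b_i)|<1/k$ for $i\le k$ and $|\lambda(a^\ast)-s|<|s-t|/3$, and symmetrically $\mu_{2k+1}\in E$ near $\nu$ with $a^\ast$-coordinate within $|s-t|/3$ of $t$.

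Because $\mu$ and $\nu$ agree on $\fA_0$, both the even and the odd subsequences of $(\mu_m)$ converge on each $b_i$ to the common value, so $(\mu_m)$ converges on $\fA_0$; yet on $a^\ast$ the values stay within $|s-t|/3$ of $s$ along even indices and of $t$ along odd ones, so $(\mu_m)$ does not converge on $\fA$. This contradicts the defining property of $\fA_0$, forcing $\mu=\nu$ and yielding metrizability of $\overline{E}$. The remaining points are routine and need only brief verification: that $[\{a_n\}]$ is countable, that a pointwise limit of finitely additive probabilities is again a member of $P(\fA)$, and that taking $\eps=|s-t|/3$ keeps the two oscillation bands around $s$ and $t$ disjoint.
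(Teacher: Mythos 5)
Your proof is correct and takes essentially the same route as the paper: the sufficiency direction rests on the same interlacing of two sequences from $E$ that approximate $\mu$ and $\nu$ on $\fA_0$ together with one extra element, showing that $\fA_0$ separates the compact set $\overline{E}$ and hence that $\overline{E}$ is metrizable. The only difference is cosmetic (you argue by contradiction and spell out the necessity direction, which the paper dismisses as easy), so no further changes are needed.
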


\begin{proof}
It is easy to see that the condition is necessary. For the sufficiency,  note that if $\fA_0$ is such  a test subalgebra for a set $E$ then elements
of $\fA_0$ separate the set of measures $\overline{E}$.

Indeed, take $\mu,\nu\in \overline{E}$ such that $\mu$ and $\nu$ agree on $\fA_0$ and consider any  $b\in\fA$.
Since $\fA_0$ is countable, there are $\mu_n,\nu_n\in E$ such that $\mu_n\to\mu$ on $[ \fA_0\cup\{b\}]$
and $\nu_n\to\nu$ on $[ \fA_0\cup\{b\}]$. Then the sequence $\mu_1,\nu_1,\mu_2,\mu_2,\ldots$ converges to $\mu|\fA_0=\nu|\fA_0$
on $\fA_0$ so it converges on $\fA$; in particular,  $\mu(b)=\nu(b)$.
\end{proof}

Denote by $\lambda$ the usual product measure on the Cantor cube $2^{\omega_1}$ defined on the product $\sigma$-algebra $Ba({\omega_1})$
(of Baire subsets of $2^{\omega_1}$).
The algebra $Ba(\omega_1)$ is $\sigma$-generated by the algebra $\clop(\omega_1)$ of clopen sets.
It will be convenient to use the following notation.

\begin{notation}\label{not}
Given a subalgebra $\fA$ of $Ba(\omega_1)$ and  $I\sub\omega_1$,  we write $\fA(I)$ for the family of those $A\in \fA$ which are determined by coordinates in the set $I$.
\end{notation}

In particular, if $\alpha<\oo$ then $Ba(\alpha)$ is family of all Baire sets determined by coordinates in $\{\beta: \beta<\alpha\}$.
Accordingly,  $\clop(\alpha)$ is the family of closed-and-open subsets of $2^\oo$ that are determined by coordinates below $\alpha$.

We shall  frequently use the fact that $\lambda$ is a product measure: if $A\in Ba(I)$ and $B\in Ba(J)$, where $I\cap J=\emptyset$, then $\lambda(A\cap B)=\lambda(A)\cdot\lambda(B)$.

We collect below some preliminary facts concerning measures on $Ba(\omega_1)$ and its subalgebras.
Recall  that, given  two finitely additive measures $\mu$ and $\lambda$ defined on a Boolean algebra $\fA$, we say that
$\mu$ is absolutely continuous with respect to $\lambda$ ($\mu \ll\lambda$) if for every $\eps>0$ there is $\delta>0$ such that
for all $a\in\fA$, if $\lambda{(a)}<\delta$ then $\mu(a)<\eps$.

The first lemma just rephrases the classical Radon-Nikodym theorem.

\begin{lemma}\label{p:2}
Let $\fA$ be any  subalgebra of $Ba(\omega_1)$. If $\mu$ is a finitely additive finite measure on $\fA$ which is  is absolutely continuous with respect to $\lambda$ on $\fA$ then
there is a function $g:2^\oo\to\er$, which is measurable with respect to the $\sigma$-algebra generated by $\fA$, such that
$\mu(A)=\int_A g\; {\rm d}\lambda$ for every $A\in\fA$.
\end{lemma}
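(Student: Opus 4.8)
The plan is to reduce the statement to the classical Radon--Nikodym theorem by first upgrading $\mu$ from a finitely additive set function on $\fA$ to a genuine countably additive measure on the $\sigma$-algebra $\Sigma=\sigma(\fA)$ generated by $\fA$, and then differentiating it against $\lambda$. The entire content sits in the passage from finite to countable additivity, and this is exactly what the absolute continuity hypothesis is designed to provide.

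First I would check that $\mu$ is countably additive already on the algebra $\fA$. Let $A_n\in\fA$ decrease to $\emptyset$. Since $\lambda$ is countably additive, $\lambda(A_n)\to 0$; given $\eps>0$, the corresponding $\delta>0$ from $\mu\ll\lambda$ forces $\mu(A_n)<\eps$ as soon as $\lambda(A_n)<\delta$, so $\mu(A_n)\to 0$. This continuity at $\emptyset$ is countable additivity on $\fA$, and the Carath\'eodory extension theorem then produces a (unique, since $\mu$ is finite) countably additive extension $\wh\mu$ to $\Sigma$.

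Next I would verify $\wh\mu\ll\lambda$ on $\Sigma$ in the ordinary measure-theoretic sense. Fix $E\in\Sigma$ with $\lambda(E)=0$ together with $\eps>0$, and take the $\delta>0$ from the $\eps$-$\delta$ condition on $\fA$. Using the outer-measure description of the extension, choose a cover $E\sub\bigcup_n A_n$ with $A_n\in\fA$ and $\sum_n\lambda(A_n)<\delta$; after replacing the $A_n$ by disjoint members of $\fA$ with the same finite unions, finite additivity gives $\sum_{n\le N}\mu(A_n)=\mu\bigl(\bigcup_{n\le N}A_n\bigr)<\eps$ for every $N$, since $\lambda\bigl(\bigcup_{n\le N}A_n\bigr)<\delta$. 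Letting $N\to\infty$ yields $\wh\mu(E)\le\sum_n\mu(A_n)\le\eps$, and as $\eps$ was arbitrary, $\wh\mu(E)=0$.

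Finally, $\wh\mu$ and $\lambda$ are finite countably additive measures on $(2^\oo,\Sigma)$ with $\wh\mu\ll\lambda$, and $\lambda$, being a probability measure, is $\sigma$-finite; the classical Radon--Nikodym theorem therefore supplies a $\Sigma$-measurable $g:2^\oo\to\er$ with $\wh\mu(E)=\int_E g\dd\lambda$ for all $E\in\Sigma$. Restricting to $A\in\fA\sub\Sigma$ gives $\mu(A)=\int_A g\dd\lambda$, as wanted. The only real obstacle is the first step; once countable additivity has been extracted from the $\eps$-$\delta$ absolute continuity, the remainder is routine measure theory on the sub-$\sigma$-algebra $\Sigma$.
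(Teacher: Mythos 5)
Your proof is correct and follows exactly the route the paper has in mind: the paper gives no argument beyond the remark that the lemma ``just rephrases the classical Radon--Nikodym theorem,'' and your write-up supplies the standard details (continuity at $\emptyset$ from the $\eps$--$\delta$ condition, Carath\'eodory extension to $\sigma(\fA)$, transfer of absolute continuity, then classical Radon--Nikodym). Nothing is missing.
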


\begin{lemma}\label{p:2.5}
Suppose that
for $\beta<\alpha$ and two algebras $\fB,\fF\sub Ba(\omega_1)$ we have
   $\fB(\alpha)\sub \left[ \fB(\beta) \cup \fF(\oo\sm\alpha)\right]$.
Then every $B\in\fB(\alpha) $ is a finite disjoint union of sets of the form $B_1\cap B_2$, where $B_1\in\fB(\beta)$ and $B_2\in \fF(\oo\sm\alpha)$.
\end{lemma}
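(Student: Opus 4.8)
The plan is to treat this as a purely Boolean-algebraic normal-form statement: the hypothesis says exactly that $B$ lies in the subalgebra $[\fB(\beta)\cup\fF(\oo\sm\alpha)]$, and the conclusion is the standard description of elements of the subalgebra generated by two subalgebras. First I would record that $\cG_1:=\fB(\beta)$ and $\cG_2:=\fF(\oo\sm\alpha)$ are themselves subalgebras of $Ba(\omega_1)$: each is the intersection of an algebra ($\fB$, respectively $\fF$) with the family of sets determined by a fixed coordinate block, and ``determined by coordinates in $I$'' is preserved by the finite Boolean operations. Note that the whole space $2^{\omega_1}$ belongs to both $\cG_1$ and $\cG_2$, being determined by coordinates in any set.

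Next I would show that the collection $\mathcal R$ of all finite unions of rectangles $B_1\cap B_2$, with $B_1\in\cG_1$ and $B_2\in\cG_2$, coincides with $[\cG_1\cup\cG_2]$. It contains $\cG_1\cup\cG_2$, since $B_1=B_1\cap\top$ and $B_2=\top\cap B_2$ with $\top=2^{\omega_1}$ lying in both factors; and it is closed under finite intersections, because a meet of two rectangles is again a rectangle. The one step requiring a short computation is closure under complementation: by De Morgan, the complement of $\bigcup_{i=1}^{n}(x_i\cap y_i)$, with $x_i\in\cG_1$ and $y_i\in\cG_2$, equals $\bigcap_{i=1}^{n}(x_i^{c}\cup y_i^{c})$, and distributing this over $i$ yields a finite union of meets of the form $\bigl(\bigcap_{i\in S}x_i^{c}\bigr)\cap\bigl(\bigcap_{i\notin S}y_i^{c}\bigr)$, each of which is a rectangle because $\cG_1$ and $\cG_2$ are subalgebras. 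Hence $\mathcal R$ is a subalgebra, and since it is clearly the smallest one containing $\cG_1\cup\cG_2$, we get $\mathcal R=[\cG_1\cup\cG_2]$.

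With this normal form available, the hypothesis $\fB(\alpha)\sub[\cG_1\cup\cG_2]$ gives, for each $B\in\fB(\alpha)$, a representation $B=\bigcup_{i=1}^{n}B_1^{(i)}\cap B_2^{(i)}$ as a finite, not yet disjoint, union of rectangles. To disjointify while preserving the rectangle structure I would pass to atoms: let $\cA_1\sub\cG_1$ be the finite subalgebra generated by $B_1^{(1)},\dots,B_1^{(n)}$, with atoms $a_1,\dots,a_p$, and let $\cA_2\sub\cG_2$ be the finite subalgebra generated by $B_2^{(1)},\dots,B_2^{(n)}$, with atoms $b_1,\dots,b_q$. Every $B_1^{(i)}$ is a union of certain $a_k$ and every $B_2^{(i)}$ a union of certain $b_l$, so each rectangle, and hence $B$ itself, is a union of products $a_k\cap b_l$. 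Since the $a_k$ are pairwise disjoint and the $b_l$ are pairwise disjoint, these products are pairwise disjoint, and because $a_k\in\cG_1=\fB(\beta)$ and $b_l\in\cG_2=\fF(\oo\sm\alpha)$, collecting the distinct pairs appearing yields the desired finite disjoint decomposition.

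I expect the only genuinely non-mechanical point to be the closure of $\mathcal R$ under complementation; the remaining steps are bookkeeping about atoms of finite algebras. It is worth flagging that the assumption $\beta<\alpha$ plays no role in the argument beyond guaranteeing that $\fB(\beta)$ and $\fF(\oo\sm\alpha)$ are well-defined subalgebras; the disjointness of the underlying coordinate blocks becomes relevant only later, when this decomposition is combined with the product structure of $\lambda$ to compute measures of the rectangles.
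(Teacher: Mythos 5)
Your proposal is correct and is exactly the ``standard'' argument the paper invokes without writing out: the paper's proof of Lemma \ref{p:2.5} consists of the single remark that it suffices to check every element of $\left[\fB(\beta)\cup\fF(\oo\sm\alpha)\right]$ has the required form, and your verification (rectangles form a subalgebra, closure under complementation via De Morgan and distributivity, disjointification through atoms of the finite subalgebras generated by the pieces) is precisely the omitted routine computation.
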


\begin{proof}
This is standard: it is enough to check that every set from the algebra $\left[ \fB(\beta) \cup \fF(\oo\sm\alpha) \right]$ has the required property.
\end{proof}

\begin{lemma}\label{p:3}
Let $\fB\sub \fF$ be two subalgebras of $Ba(\omega_1)$ such that for some  cofinal set $ S\sub\oo$
the following are satisfied

\begin{enumerate}[(i)]
\item $\fB(\alpha)$ is countable for every $\alpha\in S$;
\item whenever $\beta,\alpha\in S$  then $\fB(\alpha)\sub\left[ \fB(\beta)\cup \fF(\alpha\sm\beta)\right] $.
\end{enumerate}

Suppose  that,  for every $n$, $\mu_n\in P(\fF)$ is such a measure
that $\mu_n|\fF(\omega_1\sm\xi_n)$ is absolutely continuous with respect to $\lambda$ for some $\xi_n\in S$.
Then the closure of $\{\mu_n|\fB: n<\omega\}$ in $P(\fB)$ is metrizable.
\end{lemma}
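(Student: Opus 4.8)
The plan is to produce a countable subalgebra of $\fB$ that serves as a test algebra for the sequence in the sense of Lemma \ref{p:1}; once such a $\fB_0$ is found, the restriction map $\mu\mapsto\mu|\fB_0$ embeds the closure of $\{\mu_n|\fB:n<\omega\}$ into the metrizable space $P(\fB_0)$, and since this closure is a closed (hence compact) subset of $P(\fB)$ the embedding is a homeomorphism onto its image, giving metrizability. The natural candidate is $\fB_0=\fB(\eta^{*})$ for a suitable $\eta^{*}\in S$; this is countable by (i), and I first note that $\fB=\bigcup_{\alpha\in S}\fB(\alpha)$, since every element of $\fB$ is determined by countably many coordinates and $S$ is cofinal. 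The role of condition (ii) is that, for $\alpha\in S$ with $\alpha>\eta^{*}$, Lemma \ref{p:2.5} writes each $B\in\fB(\alpha)$ as a finite disjoint union of sets $B_1\cap B_2$ with $B_1\in\fB(\eta^{*})$ and $B_2\in\fF(\alpha\sm\eta^{*})$, so that the ``tail part'' $B_2$ lives in $\fF$ on coordinates $\ge\eta^{*}$.

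The key reduction is the factorization
\[ \mu_n(B_1\cap B_2)=\mu_n(B_1)\cdot\lambda(B_2)\qquad (B_1\in\fF(\eta^{*}),\ B_2\in\fF(\oo\sm\eta^{*})), \]
which I want to hold for every $n$ once $\eta^{*}$ is chosen large enough. Granting this, for $B$ decomposed as above one gets $\mu_n(B)=\sum_i\mu_n(B_1^i)\,\lambda(B_2^i)$ by finite additivity on $\fF$; passing to a limit measure $\mu$ of the net $\mu_n|\fB$ (this is a finite sum, and $B,B_1^i\in\fB$) yields $\mu(B)=\sum_i\mu(B_1^i)\,\lambda(B_2^i)$, so $\mu(B)$ is determined by $\mu|\fB(\eta^{*})$. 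Hence any two limit measures agreeing on $\fB(\eta^{*})$ agree on all of $\fB$, which is exactly the separation property needed; note that the pieces $B_1^i\cap B_2^i$ themselves need not lie in $\fB$, which is why everything is phrased through the $\fF$-measures $\mu_n$ and the $\fB$-elements $B,B_1^i$.

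It remains to choose $\eta^{*}$ so that the factorization holds, and this is the heart of the matter. I would first use Lemma \ref{p:2}: since $\mu_n|\fF(\oo\sm\xi_n)\ll\lambda$ it has a density $g_n\in L_1(\lambda)$, and each $L_1(\lambda)$-function depends on only countably many coordinates; together with the countably many $\xi_n$ this yields a countable $D\sub\oo$ and an $\eta^{*}\in S$ above $\sup D$ capturing all $\xi_n$ and all coordinate-supports of the $g_n$. For coordinates beyond $\eta^{*}$ the product structure of $\lambda$ then gives, within the tail algebra, that $\fF(\oo\sm\eta^{*})$ is $\lambda$-distributed and independent of the remaining tail coordinates. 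The genuine obstacle is the coupling between $B_2$ and the ``head'' coordinates below $\xi_n$, which the density $g_n$ does not see: a priori $\mu_n$ could correlate a coordinate $\ge\eta^{*}$ with coordinates $<\xi_n$ while leaving the tail marginal absolutely continuous. I expect this to be the crux, and I would handle it by an orthogonality argument. Because $\mu_n$ restricted to the tail is absolutely continuous with density supported on coordinates $<\eta^{*}$, the Walsh-type products over finite sets of coordinates $\ge\eta^{*}$ form an orthonormal system in $L_2(\wh\mu_n)$; Bessel's inequality against any fixed $L_2$-function forces all but countably many of these products to be uncorrelated with the head, and absolute continuity prevents correlation with uncountably many coordinates simultaneously (otherwise a Kakutani-type singularity w.r.t.\ $\lambda$ would arise). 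Enlarging $D$, and hence $\eta^{*}$, to absorb the resulting countably many ``coupled'' coordinates of every $\mu_n$ makes $\fF(\oo\sm\eta^{*})$ independent of everything below $\eta^{*}$ and $\lambda$-distributed, which is precisely the factorization above. I anticipate the main technical care to lie exactly here — deriving the orthonormality from Lemma \ref{p:2} and the product structure, and the bookkeeping needed to pass between $\fF$ and individual coordinates.
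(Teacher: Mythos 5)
Your overall architecture matches the paper's: find a countable test algebra $\fB(\eta^{*})$ in the sense of Lemma \ref{p:1}, decompose elements of $\fB$ via Lemma \ref{p:2.5}, and conclude from a factorization $\mu_n(B_1\cap B_2)=\mu_n(B_1)\cdot\lambda(B_2)$. You also correctly isolate the crux: the density of $\mu_n|\fF(\oo\sm\xi_n)$ controls only the tail marginal and says nothing about head--tail coupling. But your proposed resolution of that crux does not work. First, the Walsh/Bessel argument is not well posed in this setting: $\mu_n$ is only a finitely additive measure on the subalgebra $\fF$ of $Ba(\oo)$, the individual coordinate functions need not be $\fF$-measurable, and there is no space $L_2(\wh\mu_n)$ in which ``Walsh-type products over coordinates $\ge\eta^{*}$'' form an orthonormal system --- establishing their orthonormality under $\mu_n$ would in any case require exactly the head--tail independence you are trying to prove. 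Second, even granting some version of it, the choice of $\eta^{*}$ is circular: the coupled coordinates you must absorb depend on which head functions you test, but the head algebra $\fB(\eta^{*})$ itself depends on $\eta^{*}$, so enlarging $\eta^{*}$ creates new head functions to decorrelate. The Kakutani-dichotomy remark does not rescue this, because absolute continuity is only assumed on the tail algebra and places no constraint on the joint behaviour of head and tail.

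The mechanism you are missing is elementary and avoids all of this. Fix $\xi\in S$ with $\mu\ll\lambda$ on $\fF(\oo\sm\xi)$ and apply Lemma \ref{p:2} not to $\mu$ itself but to each of the countably many conditional measures $\fF(\oo\sm\xi)\ni B\mapsto\mu(A\cap B)$ for $A\in\fB(\xi)$; each is dominated by $\mu$ on the tail, hence absolutely continuous there, and its density $g_A$ depends on countably many coordinates. Choose $\eta_0\in S$ above all these supports; then for $A\in\fB(\xi)$, $B\in\fF(\eta\sm\xi)$, $C\in\fF(\oo\sm\eta)$ with $\eta\ge\eta_0$ one gets $\mu(A\cap B\cap C)=\int_{B\cap C}g_A\,{\rm d}\lambda=\lambda(C)\cdot\mu(A\cap B)$ by independence. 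The regress is cut off by hypothesis (ii): an arbitrary $G\in\fB(\eta)$ decomposes (Lemma \ref{p:2.5}) into finitely many pieces $A\cap B$ whose head part $A$ stays in the \emph{fixed} countable algebra $\fB(\xi)$, so no new head functions ever appear. Replacing your orthogonality step by this conditional Radon--Nikodym argument, the rest of your proof goes through essentially as the paper's does.
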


\begin{proof} We consider first a single measure $\mu\in P(\fF)$ that is absolutely continuous with respect to $\lambda$
on $\fF(\omega_1\sm \xi)$ for some $\xi\in S$.
\medskip

\noindent {\sc Claim.} There is $\xi< \eta_0<\oo$ such that whenever $\eta \in S\sm \eta_0$,
$A\in \fB(\xi)$, $B\in \fF(\eta\sm\xi)$, $C\in\fF(\omega_1\sm\eta)$  then
$\mu(A\cap B\cap C)=\mu(A\cap B)\cdot\lambda(C)$.

Consequently, $\mu(G\cap C)=\mu(G)\cdot \lambda(C)$ for $G\in \fB(\eta)$, $C\in\fF(\omega_1\sm\eta)$.
\medskip

To check the claim  we take any  $A\in \fB(\xi)$ and
 apply Lemma \ref{p:2} to the measure
\[\fF(\omega_1\sm \xi)\ni B \to \mu(A\cap B).\]
By Lemma \ref{p:2} there is
an $Ba(\oo\sm\xi)$--measurable function $g_A:2^\oo\to\er$ such that $\mu(A\cap B)=\int_B g_A\; {\rm d}\lambda$ for
every $B\in \fF(\oo\sm\xi)$. Since every $Ba(\oo)$--measurable function is determined by countably many coordinates (and since $\fB(\xi)$ is countable),
there is $\eta_0\in S$ such that $\xi<\eta_0<\oo$
and $g_A$ is $Ba(\eta_0\sm\xi)$--measurable for every $A\in\fB(\xi)$.
If we now take any $\eta\ge\eta_0$, $A\in \fB(\xi)$, $B\in \fF(\eta\sm\xi)$, $C\in\fF(\omega_1\sm\eta)$
 then
\[ \mu(A\cap B\cap C)=\int_{B\cap C} g_A\; {\rm d}\lambda=\lambda(C)\cdot \mu(A\cap B),\]
by stochastic independence. This proves the first statement. The second one follows from Lemma \ref{p:2.5}
and $(ii)$ --- such a set $G$ in question is a finite union of sets of the form $A\cap B$ where $A\in \fB(\xi)$, $B\in \fF(\eta\sm\xi)$.
\medskip

Coming back to  a sequence of measures  $\mu_n$ as in the assumption, it follows from Claim that there is a single $\eta<\omega_1$
 such that  for every $n$  we have
 \[ (*) \quad \mu_n(G\cap C)=\mu_n(G)\cdot \lambda(C)
 \mbox{ whenever  } G\in \fB(\eta), C\in\fF(\omega_1\sm\eta).\]
To conclude the argument, in view of Lemma \ref{p:1} it suffices to check that any subsequence of $\mu_n$'s converges
on $\fB$ whenever it converges on the countable algebra $\fB(\eta)$. This follows from (*) and the fact  that
every $H\in \fB$ belongs to some $\fB(\alpha)$ for $\alpha$ large enough so $H$ is a finite union
of intersection $G\cap C$ as in (*).
\end{proof}

\section{Construction}

Write $\lo$ for the set of all limit ordinals in $\oo$.
Recall that Jensen's diamond principle  $\diamondsuit$ declares the existence of a sequence $\la S_\alpha:\alpha<\oo\ra$ with
$S_\alpha\sub\alpha$ such that the set $\{\alpha<\oo: X\cap\alpha=S_\alpha\}$ is stationary for every $X\sub\omega_1$
(\cite[\S 7]{Kunen} or \cite{Jech}, page 191).

 We shall use $\diamondsuit$ in the following form.

\begin{lemma} \label{diamond}
Under $\diamondsuit$, there is a sequence $\langle \nu_\alpha: \alpha\in \lo \rangle$ of finitely additive measures on $Ba(\oo)$
such that for every continuous increasing sequence $\la \fF_\alpha:\alpha<\oo\ra$ of countable subalgebras of $Ba(\oo)$ and for every
$\mu\in P(Ba(\omega_1))$ the set
\[ \{\alpha\in\lo: \mu|\fF_\alpha=\nu_\alpha|\fF_\alpha\},\]
is stationary.
\end{lemma}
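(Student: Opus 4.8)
The plan is to turn $\diamondsuit$ into a guessing principle for countable pieces of measures, exploiting that $\diamondsuit$ entails $\CH$. First I would record that under $\CH$ the algebra $Ba(\oo)$ has cardinality $\oo$: every Baire set depends on countably many coordinates, there are $\oo^\omega=\con=\oo$ countable subsets of $\oo$, and each such coordinate set carries only $\con=\oo$ Baire subsets of $2^\omega$. So I fix once and for all a bijection $e\colon\oo\to Ba(\oo)$ and a G\"odel-type pairing $p\colon\oo\to\oo\times\omega$ whose closure points form a club. The second ingredient is that a finitely additive measure $m$ on a countable subalgebra $\fB\sub Ba(\oo)$ is determined by the countably many reals $m(b)$, $b\in\fB$; coding each real by a subset of $\omega$ and spreading the data along the fibres of $p$, the pair $(\fB,m)$ becomes a subset of $\oo$. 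Moreover, if $\fB\sub e[\alpha]$ and the indices $e^{-1}[\fB]$ all lie below $\alpha$, this code is a subset of $\alpha$.

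Given the $\diamondsuit$-sequence $\la S_\alpha\ra$, I would then define $\nu_\alpha$ for $\alpha\in\lo$ by decoding $S_\alpha$: if $S_\alpha$ codes (through $e$ and $p$) a genuine finitely additive probability measure $m$ on a countable subalgebra $\fB\sub e[\alpha]$, let $\nu_\alpha$ be any extension of $m$ to a finitely additive probability measure on all of $Ba(\oo)$ --- such an extension always exists by the standard Hahn--Banach / \L o\'s--Marczewski extension theorem --- and otherwise set $\nu_\alpha=\lambda$. This defines the required sequence, and the only remaining task is the guessing property.

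For the verification, fix an arbitrary continuous increasing sequence $\la\fF_\alpha\ra$ of countable subalgebras and $\mu\in P(Ba(\oo))$, and put $\fF=\bigcup_\alpha\fF_\alpha$. I would encode the whole configuration by the subset $Z\sub\oo$ that records, via $e$ and $p$, the values $\mu(e(\zeta))$ for every $\zeta\in e^{-1}[\fF]$. The heart of the matter is to produce a club $C\sub\lo$ on which $Z\cap\alpha$ codes \emph{exactly} the pair $(\fF_\alpha,\mu|\fF_\alpha)$. This is a closing-off argument: since each $\fF_\alpha$ is countable and the sequence is continuous, the maps $\beta\mapsto\sup e^{-1}[\fF_\beta]$ and $\zeta\mapsto\min\{\beta:e(\zeta)\in\fF_\beta\}$ are functions $\oo\to\oo$, and the limit $\alpha$ closed under them and under $p$ form a club $C$ on which $\fF_\alpha\sub e[\alpha]$ and $e^{-1}[\fF_\alpha]=e^{-1}[\fF]\cap\alpha$. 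Finally $\diamondsuit$ gives stationarily many $\alpha$ with $S_\alpha=Z\cap\alpha$; intersecting with $C$ leaves a stationary set, and for each such $\alpha$ the measure $\nu_\alpha$ was defined to extend the measure coded by $Z\cap\alpha$, namely $\mu|\fF_\alpha$, whence $\nu_\alpha|\fF_\alpha=\mu|\fF_\alpha$.

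The step I expect to be the main obstacle is precisely this alignment: the filtration $\la\fF_\alpha\ra$ is presented adversarially, need not agree with the fixed enumeration $e[\alpha]$, and need not exhaust $Ba(\oo)$, so one must commit to the global code $Z$ only after the filtration is given and then extract the club $C$ on which the initial segments $Z\cap\alpha$ decode to the genuine data $(\fF_\alpha,\mu|\fF_\alpha)$. Everything else --- the cardinality count, the real-coding, and the extension of finitely additive measures --- is routine bookkeeping.
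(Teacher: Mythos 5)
Your proof is correct and follows essentially the same route as the paper: reduce to the standard $\diamondsuit$-guessing of subsets of $\omega_1$ via a CH-based coding of (countable subalgebra, measure) pairs, and then use a closing-off club to align the adversarial filtration $\la\fF_\alpha\ra$ with the initial segments of the code. The only cosmetic difference is that the paper first fixes one canonical continuous filtration $\fB_\alpha$ of $Ba(\omega_1)$, guesses $\mu|\fB_\alpha$, and observes that $\{\alpha:\fF_\alpha=\fB_\alpha\cap\fF\}$ is a club, whereas you code the given filtration directly and extend the decoded measure to all of $Ba(\omega_1)$; both amount to the same argument.
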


\begin{proof}
Since $\diamondsuit$ implies CH and $|Ba(\oo)|=\con$ we can write $Ba(\oo)$ as a union $\bigcup_{\alpha<\oo} \fB_\alpha$ of a continuous increasing chain of some countable algebras.
By the standard coding using $\diamondsuit$, we can find $\langle \nu_\alpha: \alpha\in \lo \rangle$ such that for every $\mu\in P(Ba(\omega_1))$ the set
\[ S=\{\alpha\in\lo: \mu|\fB_\alpha=\nu_\alpha|\fB_\alpha\},\]
is stationary,  compare \cite[Exercise 51]{Kunen}.

Consider any continuous increasing chain of countable algebras $\la \fF_\alpha:\alpha<\oo\ra$ with the union $\fF$. It is easy to check that  the set
\[ T=\{\alpha\in\lo: \fF_\alpha=\fB_\alpha\cap \fF\},\]
is closed and unbounded in $\oo$, so $S\cap T$ is stationary, and we are done .
\end{proof}

Below we define an increasing chain $\la \fA_\alpha: {\alpha\in\lo}\ra$ of countable algebras
 $\fA_\alpha \sub Ba(\alpha)$ so that
\[ \fA=\bigcup_{\alpha\in\lo} \fA_\alpha,\]
 will be the Boolean algebra we are looking for.
 At each step $\alpha$ we choose a countable family $\cG_\alpha$ (of new generators) and define
  $\fA_\alpha$ to be the algebra  generated by
$\bigcup_{\beta<\alpha} \fA_\beta \cup \cG_\alpha$.
We also use the notation
\[ \fB_\alpha=\left[ \fA_\alpha\cup\clop(\alpha)\right],\]
for auxiliary  algebras (to which we shall apply Lemma \ref{p:3}).

To state inductive assumptions we need another piece of notation. Given any $G\sub 2^{\omega_1}$ and $\alpha<\omega_1$
we write
\[\cyl_\alpha(G)=\pi^{-1}_\alpha\left[ \pi_\alpha[G]\right],\]
where $\pi_\alpha:2^{\omega_1}\to 2^\alpha$ is the usual projection. Note that for $\beta<\alpha$, if $G= A\cap B$ where $A\in Ba(\alpha)$, $\emptyset\neq B\in Ba(\alpha\sm\beta)$ then $\cyl_\beta(G)=A$.

Here is the list of requirements (in the sequel,  $\alpha,\beta, \ldots \in\lo$ and $i,j,k,n$ are natural numbers):

\begin{enumerate}[\bf R(1)]
\item $\lambda(A)>0$ for every nonempty $A\in \fA_\alpha$;
\item every $\cG_\alpha$ is enumerated as $\cG_\alpha=\{G(\alpha,n): n<\omega\}$, where  $G(\alpha,n)\sub G(\alpha,n+1)$ for every $n$ and  $\lim_n \lambda(G(\alpha,n))=1$;
\item if $\beta<\alpha$ then for every $i$ there is $j$  such that $G(\alpha,i)\sub  G(\beta,j)$;
\item if $\beta<\alpha<\oo$
then for almost all $i$ the set  $G(\alpha, i)$ is  of the form $G(\alpha,i)=A\cap B$ with $A\in\fA_\beta$, $B\in Ba(\alpha\sm\beta)$;
\item if $\beta<\beta'< \alpha$ then    for every $i$
\[ \cyl_\beta(G(\beta',i))\sub \cyl_\beta(G(\alpha,j))\quad \mbox{for almost all } j;\]
\item if $\beta< \alpha$ then $\fB_\alpha\sub\left[ \fB_\beta\cup Ba(\alpha\sm \beta)\right]$.
\end{enumerate}

We now fix the guessing sequence $\langle \nu_\alpha: \alpha\in \lo \rangle$ of Lemma \ref{diamond}.
Our construction is modelled by those measures $\nu_\alpha$ considered on the continuous increasing chain of algebras
$\langle \fF_\alpha: \alpha\in \lo \rangle$ that we define along with the construction.
The role of $\fF_\xi$ (for $\xi\in\lo$) is to remember what  happened below $\xi$:
we assume that $\fF_\xi$ contains $\bigcup_{\alpha<\xi} \fB_\alpha$ and all the witnesses for R(6) below $\xi$  so that
\[ \fB_\alpha\sub\left[ \fB_\beta\cup \fF_\xi (\alpha\sm \beta)\right],\]
for $\beta<\alpha<\xi$. We also put $\fF=\bigcup_{\xi\in\lo} \fF_\xi$.

We start by examining our freedom at the limit step of the construction.

\begin{lemma}\label{observe}
Suppose that we are given $\fA_\beta, \cG_\beta$ for $\beta<\alpha$ satisfying (an appropriate portion of) { R(1) --- R(6)}.
Suppose also that  $\alpha=\sup_n\beta_n$ for an increasing sequence of $\beta_n\in\lo$.

Then there is  a function $\vf:\omega\to\omega$ such that the sets
\[ G^\vf(i)=\bigcap_{n\ge i} G(\beta_n, \vf(n)),\]
and corresponding algebras satisfy { R(1 ) --- R(6)} (when we set $G(\alpha,i)=G^\vf(i)$).
\end{lemma}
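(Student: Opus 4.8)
The plan is to build the single function $\vf$ by recursion on $n$, imposing at each step only finitely many ``$\vf(n)$ large enough'' conditions, each satisfiable because the relevant property holds for almost all indices at the levels $\beta_k$ already constructed. Several requirements are then essentially automatic. Monotonicity $G^\vf(i)\sub G^\vf(i+1)$ in R(2) holds because dropping a term from $\bigcap_{n\ge i}G(\beta_n,\vf(n))$ only enlarges the set, while $\lambda(G^\vf(i))\to 1$ is arranged by demanding $\lambda(G(\beta_n,\vf(n)))>1-2^{-n-2}$ (possible by R(2) at level $\beta_n$), so that $\lambda(G^\vf(i))\ge 1-\sum_{n\ge i}2^{-n-2}>0$. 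Requirement R(3) follows from R(3) at the earlier levels: given $\beta<\alpha$ pick $m$ with $\beta\le\beta_m$; then $G^\vf(i)\sub G(\beta_{\max(i,m)},\vf(\max(i,m)))$, which sits inside some $G(\beta,j)$.

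For R(4) and R(6) the key point is a single-product factorization. First I would arrange, via R(5) at the levels $\beta_n$, that the cylinders $\cyl_{\beta_m}(G(\beta_n,\vf(n)))$ increase with $n$ (for $n>m$): this is the demand $\cyl_{\beta_m}(G(\beta_{n-1},\vf(n-1)))\sub\cyl_{\beta_m}(G(\beta_n,\vf(n)))$, which R(5) at level $\beta_n$ grants for $\vf(n)$ large. Combined with the factorization of $G(\beta_n,\vf(n))$ over $\beta_m$ from R(4) at level $\beta_n$, this yields, for almost all $i$, a single product $G^\vf(i)=A_i\cap B_i$ with $A_i=\cyl_{\beta_m}(G(\beta_i,\vf(i)))\in\fA_{\beta_m}$ and $B_i\in Ba(\oo\sm\beta_m)$ (the intersection of the increasing $A$-parts is the least one). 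A limit ordinal $\beta<\alpha$ not among the $\beta_n$ is handled by the identity $\cyl_\beta=\cyl_\beta\circ\cyl_{\beta_m}$ for $\beta<\beta_m$, which shows $A_i$ itself factors as a single product over $\beta$; hence R(4) holds for every $\beta<\alpha$. Requirement R(6) is then immediate: each generator of $\fB_\alpha$ lies in $[\fB_\beta\cup Ba(\alpha\sm\beta)]$ --- the old generators by R(6) at lower levels, the clopen sets by splitting coordinates, and each $G^\vf(i)$ by the factorization just obtained.

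The main obstacle is R(5) at the new level $\alpha$, since it must hold for the single diagonal $\vf$ and for every pair $\beta<\beta'<\alpha$. Using $\cyl_\beta=\cyl_\beta\circ\cyl_{\beta_{m_0}}$ and R(5) at an intermediate level $\beta_{n_0}\ge\beta'$, I would reduce the general instance to the statement that for all $m<n$ and all $k$ one has $\cyl_{\beta_m}(G(\beta_n,k))\sub\cyl_{\beta_m}(G(\beta_j,\vf(j)))$ for almost all $j$; recall $\cyl_{\beta_m}(G^\vf(j))=\cyl_{\beta_m}(G(\beta_j,\vf(j)))$ by the previous paragraph. To secure all these ``catch-up'' inclusions with one function I would run a bookkeeping: enumerate the triples $(m,n,k)$ with $m<n$ and, at step $j$ of the recursion, additionally require for each scheduled unserved triple with $n<j$ the inclusion $\cyl_{\beta_m}(G(\beta_n,k))\sub\cyl_{\beta_m}(G(\beta_j,\vf(j)))$, which R(5) at level $\beta_j$ makes available for $\vf(j)$ large. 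Since the cylinders $\cyl_{\beta_m}(G(\beta_j,\vf(j)))$ increase in $j$, once a triple is served the inclusion persists for all later $j$, giving the required ``almost all $j$''.

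Finally, R(1) is preserved by exploiting the product structure of $\lambda$. Writing a nonempty $A\in\fA_\alpha$ as a finite union of atoms over the increasing chain $\{G^\vf(i)\}$ with coefficients from $\fA_{<\alpha}=\bigcup_{\gamma<\alpha}\fA_\gamma$, and using, for $i<i'$, the identity $G^\vf(i')\sm G^\vf(i)=\bigcup_{i\le n<i'}\bigl(G^\vf(i')\cap G(\beta_n,\vf(n))^{c}\bigr)$ to push every negated generator below a common level, I would localise each nonempty atom as $P\cap Q$ with $P\in\fA_{\beta_p}$ (for $\beta_p$ above the finitely many relevant coordinates) and $Q\in Ba(\oo\sm\beta_p)$ the independent upper factor of some $G^\vf(i')$; then $\lambda(P\cap Q)=\lambda(P)\cdot\lambda(Q)$, where $\lambda(P)>0$ by R(1) at level $\beta_p$ and $\lambda(Q)\ge\lambda(G^\vf(i'))>0$. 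I expect the bookkeeping in R(5) to be the delicate part, as it is the only place where the arbitrariness of the pair $\beta<\beta'$ must be absorbed into the choice of the one function $\vf$; the positivity in R(1) is comparatively routine once the single-product factorization of R(4) is in hand.
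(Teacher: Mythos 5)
Your overall strategy is the paper's: a single recursion on $n$ in which $\vf(n)$ is chosen large enough to meet finitely many scheduled conditions, with R(2) and R(3) handled exactly as in the paper and R(1) by independence of the upper factors. The paper, however, fixes a bijection $g:\alpha\to\omega$ and schedules its conditions for \emph{every} ordinal $\beta<\alpha$ (each $\beta$ entering the recursion at the finite stage $g(\beta)$): for all $n$ with $g(\beta)<n$ it demands both that $G(\beta_n,\vf(n))$ factor as $A_n\cap B_n$ with $A_n\in\fA_\beta$, $B_n\in Ba(\beta_n\sm\beta)$, and that the cylinders $\cyl_\beta(G(\beta_n,\vf(n)))$ increase, so that $G^\vf(i)=A_i\cap\bigcap_{n\ge i}B_n$ collapses to a single product over $\beta$ directly. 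You schedule these factorization conditions only for $\beta$ among the $\beta_m$'s.

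This is where your argument has a genuine gap, in R(4). You claim that for a general $\beta<\beta_m$ the identity $\cyl_\beta=\cyl_\beta\circ\cyl_{\beta_m}$ shows that $A_i=\cyl_{\beta_m}(G(\beta_i,\vf(i)))\in\fA_{\beta_m}$ ``itself factors as a single product over $\beta$.'' It does not: that identity only relates projections, while R(4) asserts the nontrivial fact that certain \emph{distinguished} elements (the generators, for almost all indices) are single products $A\cap B$ with $A\in\fA_\beta$; a generic element of $\fA_{\beta_m}$ need not factor, and $A_i$ is not a generator of $\fA_{\beta_m}$ but the $\cyl_{\beta_m}$-image of a generator from level $\beta_i$. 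Even if you additionally invoke R(4) at level $\beta_i$ with respect to $\beta$ to write $G(\beta_i,\vf(i))=A'\cap B'$ with $A'\in\fA_\beta$, $B'\in Ba(\beta_i\sm\beta)$, the set $\cyl_{\beta_m}(B')$ need not be Baire, so you cannot factor $A_i$ this way either. (Your derivation of R(6) survives, since there one only needs $A_i\in\fB_{\beta_m}\sub[\fB_\beta\cup Ba(\beta_m\sm\beta)]$ by R(6) at level $\beta_m$; but R(4) requires a single product with first factor in $\fA_\beta$.) The repair is exactly the bookkeeping you already deploy for R(5), extended to all $\beta<\alpha$ as in the paper: schedule, for each $\beta$, the factorization of $G(\beta_n,\vf(n))$ over $\beta$ together with the monotonicity of the $\beta$-cylinders, and collapse the intersection at level $\beta$ itself rather than trying to refactor the $\beta_m$-level first factor.
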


\begin{proof}

Using R(2) we can define $\vf_0:\omega\to\omega$ so that $\lambda(G(\beta_n, \vf_0(n))> 1-1/2^{n+2}$ for every $n$.
Then, by elementary calculations, we have $\lambda(G^\vf(i))> 1-1/2^{i+1}$ for every function $\vf\ge\vf_0$
so the sets $G^\vf(i)$ satisfy R(2). Note that, in particular,  $G^\vf(0)\neq\emptyset$.

Fix a bijection $g:\alpha\to\omega$; we inductively define a function $\vf:\omega\to\omega$ so that $\vf\ge\vf_0$ and the following are satisfied:

\begin{enumerate}[(a)]
\item if $g(\beta)< n,  \beta<\beta_n$ then

$G(\beta_n,\vf(n))=A_n\cap B_n$, where  $A_n\in\fA_\beta, B_n\in Ba(\beta_n\sm\beta)$;
\item  if $g(\beta)< n,  \beta<\beta_k< \beta_n$ then

$ \cyl_\beta(G(\beta_n, \vf(n))\supseteq  \cyl_\beta(G(\beta_k, \vf(k))$;
\item if $\beta<\beta',  g(\beta)< n,  g(\beta')< n, k< n$ then

$\cyl_\beta(G(\beta_n, \vf(n))\supseteq  \cyl_\beta(G(\beta', k)$.
\end{enumerate}

Note that such $\vf$ can be defined by inductive assumptions since (a), (b) and  (c) require fulfilling  only a finite number of  conditions
at each step. We first  check that the sets $G^\vf(i)$ satisfy  R(2) --- R(6).
Note that R(2) follows from $\vf\ge\vf_0$.

Given $\beta<\alpha$ and $i<\omega$,  $\beta<\beta_n<\alpha$ for some $n>i $; then by R(3)  there is $j$ such that
$G(\beta_n,\vf(n))\sub G(\beta,j)$; hence $G^\vf(i)\sub G(\beta_n,\vf(n))\sub G(\beta,j)$;
this shows that R(3) is preserved.

To check that the sets $G^\vf(i)$ satisfy R(4) fix $\beta<\alpha$. If we take any $i$ with
$\beta<\beta_i$ and $g(\beta)< i$ then (using the notation as in (a))
\[ G^\vf(i)=\bigcap_{n\ge i} G(\beta_n,\vf(n))=\bigcap_{n\ge i} A_n\cap B_n=A_i\cap \bigcap_{n\ge i} B_n,\]
since $A_n\supseteq A_i$ for $n\ge i$ by (b). The formula above shows that $G^\vf(i)$ has the required form
 for almost all $i$.

We check R(5) and R(6) in a similar manner: for instance, given $\beta<\beta' <\alpha$ and any $i$,
 $\cyl_\beta(G^\vf(j))\supseteq  \cyl_\beta(G(\beta',i))$ for almost all $j$ by (c).

To treat R(1) note first that we can additionally demand that the function $\vf$ satisfies
$\lambda(G^\vf(i+1)\sm G^\vf(i))>0$ for every $i$. Then R(1) follows easily from the following observation.

Suppose that  $\lambda$ is strictly positive on some $\fA_\beta$. Let $G=A\cap B$, where $A\in\cA_\beta$,
$B\in Ba (\alpha\sm\beta)$, $\lambda(A),\lambda(B)>0$. Then $\lambda$ is strictly positive
on the algebra generated by $\cA_\beta$ and $G$.
\end{proof}

\begin{remark}\label{ra}
Lemma \ref{observe} is stated in the form presenting the main idea of a diagonal argument. However,
what we really need to know at the limit step of the construction in \ref{construction}  is slightly more complicated.
Suppose that in the setting of \ref{observe} we are additionally given a sequence of Baire sets $C_n\in Ba(\beta_{n+1}\sm\beta_n)$
(with $\lambda(C_n)$ growing fast to 1).
Then the above proof shows, after minor changes, that the sets
\[ G^\vf(i)=\bigcap_{n\ge i} C_n\cap G(\beta_n, \vf(n)),\]
also satisfy the assertion of \ref{observe} for some $\vf$.
\end{remark}

\begin{construction}\label{construction}
Suppose that the construction has been done for $\beta\le\alpha'$ and consider the next limit ordinal $\alpha=\alpha'+\omega$.
We use this step simply to add new sets to $\fA_{\alpha'}$: Choose any strictly increasing sequence of
$C_n\in \clop({\alpha\sm \alpha'})$ such that $\lim_n \lambda(C_n)=1$.
Define $G(\alpha,n)=G(\alpha',n)\cap C_n$ for $n<\omega$ and set
\[ \cG_\alpha=\{G(\alpha,n): n<\omega\}.\]
Checking that R(1) --- R(6) are preserved is fairly standard;
for future reference note the following.

\begin{remark}\label{ar}
There is $B\in\fA_\alpha$ such that
$\inf\{\lambda(A\bigtriangleup B): A\in\fA_{\alpha'}\}>0$.
Indeed, this holds whenever we take  $B=G(\alpha,n)=G(\alpha',n)\cap C_n$
with  $\lambda(C_n)>0$ since $C_n$ is independent from all elements in $\fA'$.
\end{remark}

Suppose now that the construction has been done below $\alpha$ which is a limit ordinal in $\lo$.

Let us say that {\bf Limit Case (1)}  happens if the following holds:  there are $\eps>0$ and $\beta_n, \alpha_n\in\lo$, where
\[ \beta_0 <\alpha_0\le \beta_1<\alpha_2\le\ldots<\alpha,\]
and there are $C_n\in \fF_{\alpha_n}(\alpha_n\sm \beta_n)$ such that
$ \lim_n \lambda(C_n)=1$ while  $\nu_\alpha(C_n)\le 1-\eps$ for every $n$.

When  { Limit Case (1)}  happens, we proceed as follows.

Note first that, passing to a subsequence if necessary, we can assume that $\lambda(C_n)>1-1/2^{n+2}$ for every $n$.
Then we put
\[ G(\alpha, i)=\bigcap_{n\ge i} C_n\cap G\left(\beta_n,\vf(n)\right),\]
where $\vf$ is chosen as in Lemma \ref{observe} which guarantee that the new generators satisfy R(1) --- R(6).
Note that the appearance of $C_n$'s in the formula above (those sets were not mentioned in \ref{observe}) does
not change much since every $C_n$ is determined by coordinates in $\alpha_n\sm\beta_n$, see Remark \ref{ra}.

We say that {\bf Limit Case (2)} happens in the remaining case.
We then perform the previous construction in a simpler form, taking above $C_n=2^\oo$ for every $n$.
\end{construction}

\section{Analyzing the resulting Stone space}

We shall now prove Theorem \ref{main} by analyzing the Stone space $K=\ult(\fA)$ of the Boolean algebra $\fA$ constructed in the previous section.

\begin{lemma}\label{l0}
For every $\alpha\in\lo$ the set
\[M_\alpha=K\sm \bigcup_n \widehat{G(\alpha,n)},\]
is a metrizable subspace of $K$
\end{lemma}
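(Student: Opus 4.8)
The plan is to exploit that $M_\alpha$, being the complement of the open set $\bigcup_n\wh{G(\alpha,n)}$, is a closed and hence compact subspace of the Stone space $K$, and that it is zero-dimensional as a subspace of a Stone space. A compact zero-dimensional space whose algebra of clopen sets is countable has a countable base and is therefore metrizable, so the entire statement reduces to showing that $M_\alpha$ carries only countably many clopen sets.

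First I would record that the restriction map $r\colon\fA\to\clop(M_\alpha)$, $r(a)=\wh a\cap M_\alpha$, is a surjective Boolean homomorphism. Surjectivity is the standard separation fact in Stone spaces: if $C$ is clopen in $M_\alpha$, then $C$ and $M_\alpha\sm C$ are disjoint compacta in $K$, so there is $a\in\fA$ with $C\sub\wh a$ and $(M_\alpha\sm C)\cap\wh a=\emptyset$, whence $\wh a\cap M_\alpha=C$. Thus $\clop(M_\alpha)=r(\fA)$, and it suffices to bound this image.

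The key observation---and the only place the construction enters---is R(3). Recall that $\fA$ is generated by $\bigcup_{\gamma\in\lo}\cG_\gamma$, and that the recursion defining $\fA_\alpha$ yields $\fA_\alpha=\left[\bigcup_{\gamma\le\alpha}\cG_\gamma\right]$ by a routine transfinite induction. I claim that every generator $G(\gamma,i)$ with $\gamma>\alpha$ satisfies $r(G(\gamma,i))=\emptyset$. Indeed, by R(3) (applied with the smaller ordinal $\alpha$ and the larger ordinal $\gamma$) there is $j$ with $G(\gamma,i)\sub G(\alpha,j)$, so $\wh{G(\gamma,i)}\sub\wh{G(\alpha,j)}$, while $M_\alpha$ is disjoint from $\wh{G(\alpha,j)}$ by its very definition. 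So all generators created above level $\alpha$ collapse to $\emptyset$ under $r$.

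Since $r$ is a homomorphism and $\fA$ is generated by the family $\{G(\gamma,i)\}$, the image $\clop(M_\alpha)=r(\fA)$ is generated by $\{r(G(\gamma,i))\}$: those with $\gamma>\alpha$ contribute only $\emptyset$, while those with $\gamma\le\alpha$ already lie in the countable algebra $\fA_\alpha$. Hence $\clop(M_\alpha)=r(\fA_\alpha)$ is countable, and metrizability of $M_\alpha$ follows. I expect no genuine obstacle here: the whole content has been front-loaded into arranging R(3) during the construction, so that everything generated above level $\alpha$ is absorbed into the very sets $G(\alpha,j)$ that are deleted in passing to $M_\alpha$. The only points worth spelling out carefully are the surjectivity of $r$ and the bookkeeping identity $\fA_\alpha=\left[\bigcup_{\gamma\le\alpha}\cG_\gamma\right]$, both of which are standard.
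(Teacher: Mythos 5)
Your proof is correct and follows essentially the same route as the paper's: the whole argument rests on R(3) forcing every generator $G(\gamma,i)$ with $\gamma>\alpha$ to be absorbed into some $G(\alpha,j)$ and hence to vanish on $M_\alpha$, after which $M_\alpha$ is a compact space with a countable base of clopen sets coming from $\fA_\alpha$. The paper states this directly as ``$\{\wh{A}\cap M_\alpha: A\in\fA_\alpha\}$ is a countable base,'' while you phrase it via the surjective restriction homomorphism onto $\clop(M_\alpha)$; the two formulations are interchangeable.
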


\begin{proof}
Indeed, if $\xi>\alpha$ then for every $k$ we have $G(\xi,k)\cap M_\alpha=\emptyset$ by R(3); it follows that
$\{\wh{A}\cap M_\alpha: A\in\fA_\alpha\}$ is a countable base of $M_\alpha$.
\end{proof}

\begin{lemma}\label{l1}
The space $K$ is a nonseparable  Corson compact space supporting a strictly positive measure of type $\omega_1$.
\end{lemma}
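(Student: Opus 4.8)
The plan is to verify, in turn, the four assertions contained in the statement: that $\mu:=\widehat\lambda$ is a strictly positive measure on $K$, that its Maharam type is exactly $\omega_1$, that $K$ is Corson compact, and finally that $K$ is nonseparable. Strict positivity is immediate from R(1): the basic clopen sets $\widehat a$, $a\in\fA$, form a base of $K=\ult(\fA)$, and for a nonempty $a$ one has $\mu(\widehat a)=\lambda(a)>0$, so $\mu$ is positive on every nonempty open set. For the type, the bound $\le\omega_1$ follows because $\bigl|\bigcup_{\alpha\in\lo}\fA_\alpha\bigr|\le\aleph_1$ and the family $\{\widehat a:a\in\fA\}$ is dense, in the measure-algebra metric, in the algebra of all Borel subsets of $K$. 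The lower bound is where Remark \ref{ar} enters: if $\mu$ had countable type, one could find a countable $D\sub\fA$ dense in the measure algebra of $(\fA,\lambda)$; by regularity of $\oo$ we would have $D\sub\fA_{\alpha'}$ for a single $\alpha'\in\lo$, and then the set $B\in\fA_{\alpha'+\omega}\sub\fA$ furnished by Remark \ref{ar} satisfies $\inf_{A\in D}\lambda(A\btu B)\ge\inf_{A\in\fA_{\alpha'}}\lambda(A\btu B)>0$, contradicting density. Hence $\mu$ is of type $\omega_1$.

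I would deduce nonseparability formally once Corson compactness is in hand: a Corson compact space is $\azm$, so if $K$ were separable it would be metrizable, being its own separable subspace; but a compact metrizable space carries only measures of countable type, contradicting that $\mu$ has type $\omega_1$. Hence $K$ is nonseparable, indeed nonmetrizable.

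The heart of the matter is Corson compactness, and my approach is to embed $K$ into a $\Sigma$-product indexed by the generators. The first observation is that R(3) forces the escape set $S(x)=\{\alpha\in\lo:x\notin M_\alpha\}$ to be an initial segment of $\lo$: if $x\notin M_\alpha$, so that $x\in\widehat{G(\alpha,i)}$ for some $i$, then $G(\alpha,i)\sub G(\beta,j)$ for $\beta<\alpha$ gives $x\in\widehat{G(\beta,j)}$, i.e. $x\notin M_\beta$. On the part of $\lo$ where the point has already escaped, the metrizable slices of Lemma \ref{l0}, together with the product form R(4) and the coherence R(5) of the generators, let me record $x$ by its escape index at each $\alpha\in S(x)$ and by its trace on the countable algebra $\fA_\alpha$; these data separate points and serve as coordinates of a map into a $\Sigma$-product, provided $S(x)$ is countable.

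The main obstacle, therefore, is to show that every ultrafilter escapes at some countable level, i.e. that the core $\bigcap_{\alpha\in\lo}\bigcup_n\widehat{G(\alpha,n)}$ is empty, so that $S(x)$ is a proper initial segment of $\lo$ for every $x$. Since the sets $O_\alpha=\bigcup_n\widehat{G(\alpha,n)}$ are dense open and, by R(3), decreasing in $\alpha$, this amounts to controlling an intersection of $\oo$ many dense open sets — a genuinely infinitary point where the diagonal construction of the generators (Lemma \ref{observe}, Remark \ref{ra}, and the Limit Cases of \ref{construction}) must be invoked to guarantee that no single ultrafilter can meet every column $\{G(\alpha,n)\}_n$. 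Once this is established, the countability of $S(x)$ together with the metrizability of the slices $M_\alpha$ delivers the countable-support embedding into a $\Sigma$-product, completing the proof that $K$ is Corson compact.
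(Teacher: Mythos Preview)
Your treatment of strict positivity, of the Maharam type via Remark~\ref{ar}, and of nonseparability from Corson compactness is fine and matches the paper. The gap is in the Corson part.

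You correctly isolate the decisive point: one must show that the core $\bigcap_{\alpha\in\lo}\bigcup_n\widehat{G(\alpha,n)}$ is empty, equivalently that no centred subfamily of $\cG=\{G(\alpha,n):\alpha\in\lo,\ n<\omega\}$ is uncountable. But you do not prove this, and the tools you point to are the wrong ones. Lemma~\ref{observe} and Remark~\ref{ra} only say that \emph{some} choice of generators at a limit stage preserves R(1)--R(6); they place no constraint at all on a fixed ultrafilter, and a diagonal construction by itself cannot rule out an $x$ lying in every $O_\alpha$. What actually kills such an $x$ is the $\diamondsuit$-guessing of Lemma~\ref{diamond}: extend the centred family to a $0$--$1$ measure $\mu$ on $Ba(\oo)$; at a stationary set of $\alpha$ one has $\nu_\alpha|\fF_\alpha=\mu|\fF_\alpha$. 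Since $\fF_\alpha\supseteq\clop(\alpha)$ and $\mu$ is two-valued, one easily finds $C_n\in\fF_\alpha$ with $\lambda(C_n)\to 1$ and $\mu(C_n)=0$, so Limit Case~(1) was triggered at that $\alpha$. Then $G(\alpha,i)\sub C_i$ forces $\mu(G(\alpha,i))=0$, so $G(\alpha,i)$ is not in the centred family, and by R(3) neither is any $G(\beta,j)$ with $\beta\ge\alpha$. This is the missing idea; without invoking $\diamondsuit$ at this step there is no argument.

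A secondary point: your proposed embedding via ``escape index plus trace on $\fA_\alpha$'' is more intricate than needed and is not fully specified. The paper simply uses $\Phi:x\mapsto(\chi_{\widehat G}(x))_{G\in\cG}\in 2^{\cG}$; since $\cG$ generates $\fA$, $\Phi$ is injective, and the countability of each $\{G\in\cG:x\in\widehat G\}$ (which is exactly the ``core is empty'' statement above) lands $\Phi$ in $\Sigma(2^{\cG})$.
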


\begin{proof}
Since $\lambda(A)>0$ for every nonempty $A\in\fA$; the measure $\wh{\lambda}$, uniquely determined by the formula
$\wh{\lambda}(\wh{A})=\lambda(A)$  for $A\in\fA$, is strictly positive on $K$.

To see  that $\wh{\lambda}$ is a measure of uncountable type, note that by Remark \ref{ar},
for every $\alpha$ there is $B\in\fA$ such that
$\inf\{\lambda(A\bigtriangleup B): A\in\fA_\alpha\}>0$ so no countable subfamily of
$\fA$ can be $\btu$-dense in $\fA$ with respect to $\lambda$.
In particular, $K$ is not metrizable, as it carries a measure of uncountable type.

In order to check that   $K=\ult(\fA)$ is Corson compact it suffices to find a family
$\cG\sub\fA$ such that  $\fA=[\cG]$, and   having the property that
every centered $\cG_0\sub\cG$ is countable. Indeed, in such a case we have an embedding
\[ \Phi: \ult(\fA)\ni x\to \la \chi_{\wh{G}}(x): G\in\cG\ra,\]
into $\Sigma(2^{\cG})$. Here $\chi_{\wh{G}}$
denotes a characteristic function of the clopen set $\wh{G}\sub K$ so $\Phi$ is clearly continuous; the injectivity of $\Phi$
follows from the fact that $\cG$ generates $\fA$.

In our case we take $\cG=\{G(\alpha,n): \alpha\in\lo, n<\omega\}$. If $\cG_0\sub\cG$ is centered then there is
a 0-1 measure $\mu$ on $Ba(\oo)$ such that $\mu(G)=1$ for $G\in \cG_0$.
Then $\mu|\fF_\alpha$ was guessed at some limit step $\alpha$ by $\nu_\alpha$. Then, necessarily, Limit case (1) happened (recall
that, in particular, $\fF_\alpha$ contains $\clop(\alpha)$) so $\nu_\alpha(G(\alpha,n))<1$  and thus  $\nu_\alpha(G(\alpha,n))=0$
for every $n$.  By R(3), $G(\beta,n)\notin \cG_0$ whenever $\beta\ge\alpha$ and $n\in\omega$ so $\cG_0$ is indeed countable.

Finally, $K$ is nonseparable since every separable Corson compactum is metrizable.
\end{proof}

\begin{lemma}\label{l2}
Let $\mu\in P(Ba(\omega_1))$ be such a measure that $\wh{\mu|\fA}$ defines a regular Borel measure in $P(K)$ vanishing
on all closed metrizable subsets of $K$.

Then $\mu|\fF(\omega_1\sm\alpha_0)$ is absolutely continuous with respect to $\lambda$ for some $\alpha_0<\omega_1$.
\end{lemma}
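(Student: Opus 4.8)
The plan is to argue by contradiction: I assume that $\mu|\fF(\oo\sm\gamma)$ fails to be absolutely continuous with respect to $\lambda$ for \emph{every} $\gamma<\oo$, and I locate a single limit ordinal at which this failure collides with the assumption that $\mu$ kills metrizable sets. The first step is to record what that assumption buys us. The set $M_\alpha=K\sm\bigcup_n\wh{G(\alpha,n)}$ is closed, since the $\wh{G(\alpha,n)}$ are clopen and increasing by R(2), and it is metrizable by Lemma \ref{l0}; hence $\wh{\mu|\fA}(M_\alpha)=0$, and by countable additivity of the regular Borel measure $\wh{\mu|\fA}$ this is exactly the statement that $\lim_n\mu(G(\alpha,n))=1$ for every $\alpha\in\lo$. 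This single identity is the only thing I will use about $\mu$, and the whole proof reduces to contradicting it at one cleverly chosen $\alpha$.

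The mechanism for producing the contradiction is Limit Case (1) of Construction \ref{construction}: if at a limit ordinal $\alpha$ that case was triggered, then the generators were defined by $G(\alpha,i)=\bigcap_{n\ge i}C_n\cap G(\beta_n,\vf(n))$ for some witnessing bands $C_n\in\fF_{\alpha_n}(\alpha_n\sm\beta_n)$ with $\alpha_n<\alpha$, $\lambda(C_n)\to1$ and $\nu_\alpha(C_n)\le1-\eps$. In particular $G(\alpha,i)\sub C_i$, so $\mu(G(\alpha,i))\le\mu(C_i)$. If in addition $\alpha$ is a point where the guessing sequence catches $\mu$, i.e.\ $\mu|\fF_\alpha=\nu_\alpha|\fF_\alpha$ (Lemma \ref{diamond} makes the set $T$ of such $\alpha$ stationary), then since $C_i\in\fF_{\alpha_i}\sub\fF_\alpha$ we get $\mu(C_i)=\nu_\alpha(C_i)\le1-\eps$, whence $\lim_i\mu(G(\alpha,i))\le1-\eps<1$ — contradicting the identity above. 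So it suffices to exhibit one $\alpha\in T$ at which Limit Case (1) happened; equivalently, recalling that Limit Case (1) is precisely the \emph{existence} of such a witnessing band sequence for $\nu_\alpha$, it suffices to find $\alpha\in T$ admitting a band sequence below $\alpha$, with uniform gap, on which $\mu$ is bounded away from $1$.

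The hard part is the \emph{localization of the non-absolute-continuity witnesses into bounded bands}. The crucial observation is that any witness $C\in\fF(\oo\sm\gamma)$ automatically lies in some $\fF_\xi\sub Ba(\xi)$, so it is determined by coordinates in an interval $[\gamma,\xi)$ and is genuinely a band set $C\in\fF(\xi\sm\gamma)$. To control the gap uniformly I introduce, for $\gamma<\oo$, the quantity $M(\gamma)=\sup_{\delta>0}\inf\{\mu(C):C\in\fF(\oo\sm\gamma),\ \lambda(C)\ge1-\delta\}$; failure of absolute continuity above $\gamma$ is exactly $M(\gamma)<1$, and $M$ is non-decreasing in $\gamma$. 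Fixing attention on a limit ordinal $\alpha$ (to be chosen) with $M(\alpha)<1$, every $\gamma<\alpha$ satisfies $M(\gamma)\le M(\alpha)<1$, which delivers a single gap $\eps=(1-M(\alpha))/2$ valid for all bands below $\alpha$. To force the band levels $\xi$ to stay below $\alpha$ I assign to each triple $(\gamma,\delta,\eta)\in\oo\times\Q^+\times\Q^+$ the level of some near-optimal witness (one with $\lambda(C)\ge1-\delta$ and $\mu(C)<M(\gamma)+\eta$, which exists by definition of $M$) and take $D$ to be the club of closure ordinals of this function; on $D$ every near-optimal witness above any $\gamma<\alpha$ can be chosen inside $\fF(\alpha\sm\gamma)$.

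Finally I pick $\alpha\in T\cap D$ (stationary, hence nonempty; intersecting with the club of limits of limit ordinals lets me round band-endpoints to members of $\lo$). Along a cofinal sequence $\beta_n\nearrow\alpha$ of limit ordinals I extract, using $\alpha\in D$ with $\delta=1/2^{n+2}$ and $\eta=\eps/2$, band sets $C_n\in\fF_{\alpha_n}(\alpha_n\sm\beta_n)$ with $\beta_n<\alpha_n<\alpha$, $\lambda(C_n)\ge1-1/2^{n+2}\to1$ and $\mu(C_n)<M(\beta_n)+\eps/2\le1-\eps$. Since $\mu=\nu_\alpha$ on $\fF_\alpha$ and each $C_n\in\fF_\alpha$, these $C_n$ witness Limit Case (1) for $\nu_\alpha$; so that case did occur at $\alpha$ in the construction, and the previous paragraph yields $\lim_i\mu(G(\alpha,i))<1$, contradicting $\lim_i\mu(G(\alpha,i))=1$. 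This contradiction shows $\mu|\fF(\oo\sm\alpha_0)\ll\lambda$ for some $\alpha_0$, which is the assertion. I expect the genuinely delicate point to be the club/closure bookkeeping in the previous paragraph, namely guaranteeing uniformly-good band witnesses below a guessing ordinal while keeping all endpoints in $\lo$; the rest is routine.
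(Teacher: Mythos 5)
Your argument is correct, and its two pillars are exactly the paper's: (i) if Limit Case (1) was triggered at an ordinal $\alpha$ where the $\diamondsuit$-sequence guesses $\mu$ on $\fF_\alpha$, then $G(\alpha,i)\sub C_i$ forces $\wh{\mu}(M_\alpha)\ge\eps$ on the metrizable set $M_\alpha$ of Lemma \ref{l0}, which is forbidden; (ii) a stationary-set argument trades the global behaviour of $\mu$ against the guessing set. The only structural difference is the direction in which you run step (ii). The paper argues directly: Limit Case (1) fails at every guessing ordinal $\alpha\in S$, this failure is witnessed by a regressive function $\xi^\eps(\alpha)<\alpha$, and the pressing down lemma (applied for $\eps=1/k$ and then combined) produces the single bound $\alpha_0$. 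You argue by contradiction: assuming no $\alpha_0$ works, you introduce the monotone quantity $M(\gamma)$ measuring the worst band above $\gamma$, take the club $D$ of closure points of a witness-level function, and at any $\alpha\in T\cap D$ you assemble a Limit Case (1) sequence for $\nu_\alpha=\mu|\fF_\alpha$, contradicting (i). These are the two dual faces of the same combinatorial fact (Fodor versus ``stationary meets club of closure points''), so the proofs are logically contrapositive reorganizations of one another; your $M(\gamma)$ bookkeeping is a clean way to extract the uniform gap $\eps$, and the interleaving of the bands $\beta_n<\alpha_n\le\beta_{n+1}$ together with rounding endpoints into $\lo$ is, as you say, routine. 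No gap.
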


\begin{proof}
Recall that the measures $\nu_\alpha$ often guess all the other  measures on algebras  $\fF_\alpha$.
Hence, by Lemma \ref{diamond} the set $S$ of those $\alpha\in \lo$ for which
${\mu}$ agrees with $\nu_\alpha$ on $\fF_\alpha$ is stationary.
\medskip

\noindent {\sc Claim.} Limit Case (1) happened for no $\alpha\in S$.
\medskip

Suppose otherwise, that (1) occurred for some $\alpha\in S$ and $\eps>0$. Then, using the notation of the construction,
${\mu}(G(\alpha,n))\le 1-\eps$ for every $n$. In other words, if we take
\[M_\alpha=K\sm \bigcup_n \widehat{G(\alpha,n)},\]
then $\wh{\mu}(M_\alpha)\ge\eps$. To arrive at contradiction, it is sufficient to note that $M_\alpha$ is metrizable by Lemma \ref{l0}.
\medskip

Fix some $\eps>0$. We know from Claim that
\[  
\left(\forall \alpha\in S\right) \left(\exists \xi^\eps(\alpha)<\alpha\right) \left(\forall \xi^\eps(\alpha)<\beta<\alpha\right)
(\exists n) (\forall A\in \fF_\beta(\beta\sm  \xi^\eps(\alpha))\]
\[ \lambda(A)<1/n\Rightarrow {\mu}(A)<\eps.\]
By the pressing down lemma, there is $\alpha_0^\eps<\omega_1$ such that $\xi^\eps(\alpha)\le \alpha_0^\eps$ for stationary many $\alpha\in S$.
Repeating  this argument for every $\eps=1/k$, we conclude that there is $\alpha_0<\omega_1$ such that, for all $\eps>0$,
$\xi^\eps(\alpha)\le\alpha_0$ for $\alpha$ from some stationary set $T_\eps \sub S$.

It follows that ${\mu}$ is absolutely continuous with respect to $\lambda$ on $\fF_\beta({\beta\sm \alpha_0})$ for every $\beta>\alpha_0$.
Hence,
${\mu}\ll\lambda$ on $\fF({\omega_1 \sm \alpha_0})$, as required.
\end{proof}

We are finally ready to verify the main point.

\begin{theorem}
The space $P(K)$ is monolithic.
\end{theorem}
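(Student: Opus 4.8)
The plan is to prove that the closure in $P(K)$ of an arbitrary countable set $E=\{\mu_n:n<\omega\}$ is metrizable; since a compact space is $\azm$ exactly when the closure of each of its countable subsets is metrizable, this yields the theorem. (One could instead phrase the argument through the test-algebra criterion of Lemma \ref{p:1}, but the route below produces a metrizable closure directly.) The first step is to split each $\mu_n$ along the metrizable sets $M_\alpha$ of Lemma \ref{l0}. By R(3) the $M_\alpha$ increase with $\alpha$, so $\alpha\mapsto\mu_n(M_\alpha)$ is nondecreasing; being a monotone $\oo$-indexed family of reals it is eventually constant, and as $E$ is countable there is a single $\gamma\in\lo$ with $\mu_n(M_\alpha)=\mu_n(M_\gamma)$ for all $\alpha\ge\gamma$ and all $n$. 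I put $Z=M_\gamma$, a compact metrizable subspace of $K$, and write $\mu_n=\mu_n^Z+\mu_n'$, where $\mu_n^Z$ is the restriction of $\mu_n$ to the Borel subsets of $Z$ and $\mu_n'$ is its restriction to $K\sm Z$. The choice of $\gamma$ then forces $\mu_n'(M_\alpha)=0$ for every $\alpha\in\lo$: for $\alpha\le\gamma$ one has $M_\alpha\sub Z$, and for $\alpha>\gamma$ one has $\mu_n(M_\alpha\sm M_\gamma)=0$.

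Next I treat the non-metrizable parts $\mu_n'$. Discarding the trivial case $\mu_n'=0$ and normalising, each $\bar\mu_n'=\mu_n'/\mu_n'(K)$ is a Radon probability measure on $K$ vanishing on every $M_\alpha$. Picking any finitely additive extension of $\bar\mu_n'|\fA$ to a measure in $P(Ba(\oo))$ and invoking Lemma \ref{l2} — whose proof uses the metrizability hypothesis only through the equalities $\wh\mu(M_\alpha)=0$ for $\alpha$ in the relevant stationary set, which hold here — I obtain $\alpha_0(n)<\oo$ such that $\bar\mu_n'\ll\lambda$ on $\fF(\oo\sm\alpha_0(n))$. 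I then apply Lemma \ref{p:3} with $\fB=\bigcup_{\alpha\in\lo}\fB_\alpha$, $\fF=\bigcup_{\xi\in\lo}\fF_\xi$ and $S=\lo$, the countable $\fB_\alpha$ serving as the algebras $\fB(\alpha)$: hypotheses (i) and (ii) are precisely what R(6) and the construction of the $\fF_\xi$ supply, namely that $\fB_\alpha$ is countable and $\fB_\alpha\sub[\fB_\beta\cup\fF(\alpha\sm\beta)]$, while the absolute-continuity hypothesis is the output of Lemma \ref{l2} with any $\xi_n\in\lo$ above $\alpha_0(n)$. Hence the closure of $\{\bar\mu_n'|\fB\}$ in $P(\fB)$, and a fortiori of $\{\bar\mu_n'|\fA\}$, is metrizable; multiplying by the scalars $\mu_n'(K)\in[0,1]$ (a continuous operation) shows that the closure $F'$ of $\{\mu_n'|\fA\}$ among positive measures on $\fA$ of mass $\le1$ is metrizable as well.

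To finish I combine the two parts. The measures $\mu_n^Z$ all lie in $\cM^+(Z)$, the $weak^\ast$ compact metrizable space of positive measures of mass at most one on the compact metric space $Z$, and the assignment $\nu\mapsto(A\mapsto\nu(\wh A\cap Z))$ embeds $\cM^+(Z)$ continuously into $P(\fA)$, since each $\wh A\cap Z$ is clopen in $Z$. Consider the addition map $a:\cM^+(Z)\times F'\to P(\fA)$ given by $a(\nu,\rho)(A)=\nu(\wh A\cap Z)+\rho(A)$, which is continuous on a compact metrizable domain; its image $W$ is therefore a compact metrizable space, as a Hausdorff continuous image of a compact metric space is metrizable. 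For each $n$ we have $\mu_n|\fA=a(\mu_n^Z,\mu_n'|\fA)\in W$, so $E\sub W$; since $W$ is closed in $P(K)=P(\fA)$, it follows that $\overline E\sub W$, and being a subspace of a metrizable space, $\overline E$ is metrizable. As $E$ was an arbitrary countable subset, $P(K)$ is $\azm$.

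I expect the main obstacle to be the bookkeeping concentrated in the second step. Concretely, one must check that the auxiliary system really meets conditions (i)--(ii) of Lemma \ref{p:3}, the delicate point being that $\fB_\alpha$ equals the part of $\bigcup_\beta\fB_\beta$ determined by coordinates below $\alpha$ (so that it is genuinely countable), which rests on the independence built into R(1), R(4) and R(6). One must also justify applying Lemma \ref{l2} to the measures $\bar\mu_n'$, which are only known to annihilate the specific sets $M_\alpha$ rather than every closed metrizable subset of $K$; this is legitimate because the proof of Lemma \ref{l2} exploits the hypothesis solely through the vanishing on the $M_\alpha$.
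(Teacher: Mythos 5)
Your argument is correct and has the same skeleton as the paper's proof: split each measure into a part carried by a metrizable compactum and a part annihilating metrizable sets, dispose of the first part using metrizability of the space of measures on a metric compactum, reduce the second part to Lemma \ref{l2} combined with Lemma \ref{p:3}, and reassemble everything through a continuous addition map defined on a compact metrizable product. The one genuine difference is how the decomposition is obtained. The paper sets $c=\sup\{\mu(L): L\ \mbox{closed metrizable}\}$, realizes $c$ on a single closed metrizable $L$ (using that $K$ is Corson compact, hence $\azm$, so that the closure of a countable union of metrizable compacta is metrizable), and so obtains a residual part vanishing on \emph{all} closed metrizable subsets of $K$ --- exactly the hypothesis of Lemma \ref{l2} as stated. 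You instead split only along the distinguished sets $M_\alpha$ of Lemma \ref{l0}, using eventual constancy of the nondecreasing bounded map $\alpha\mapsto\mu_n(M_\alpha)$ on $\oo$; your residual measures are then only known to annihilate the $M_\alpha$'s, so you must --- and correctly do --- observe that the proof of Lemma \ref{l2} uses its hypothesis only through the equalities $\wh{\mu}(M_\alpha)=0$ for $\alpha$ in the stationary guessing set. Both routes work; yours avoids invoking monolithicity of $K$ at this stage at the cost of quoting Lemma \ref{l2} under a nominally weaker hypothesis than the one stated. The ``delicate point'' you flag, that $\fB(\alpha)=\fB_\alpha$ so that hypothesis (i) of Lemma \ref{p:3} really holds, is the right thing to worry about; it follows from R(6) together with Lemma \ref{p:2.5} and R(1), and the paper leaves it implicit as well.
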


\begin{proof}
Given  any  measure $\mu$ in $P(K)$,
consider
\[c=\sup\{\mu(L): L\sub K, L\mbox{ is closed and metrizable}\}.\]
 Take a sequence of closed metrizable subspaces $L_n$ with
$\mu(L_n)>c-1/n$. Then $L=\overline{\bigcup_n L_n}$ is again metrizable because,  by Lemma \ref{l1}, $K$ is Corson compact hence monolithic. We have $\mu(L)=c$ so
$\mu(M)=0$ for every closed metrizable $M\sub K\sm L$.

It follows that every measure from $P(K)$ is a convex combination of a measure concentrated on a metrizable subspace of $K$ and a measure
vanishing on all closed metrizable subspaces of $K$.
Therefore, it is sufficient to check that the closure of $\{\mu_n: n<\omega\}\sub P(K)$
in $P(K)$ is metrizable whenever every $\mu_n$ vanishes on metrizable subsets of $K$.
This is a direct consequence of Lemma \ref{l2} and Lemma \ref{p:3}.
\end{proof}

\subsection*{Acknowledgements}
I wish to thank Witold Marciszewski for our discussion on $\azm$ spaces. I am indebted to the referee for his/her careful reading
and several comments that enabled me to clarify  the main idea and improve the presentation.

The research has been supported by the grant 2018/29/B/ST1/00223 from National Science Centre, Poland.


\normalsize

\end{document}